\newtheorem{thm}{Theorem}[section]
\newtheorem{lem}[thm]{Lemma}
\newtheorem{prop}[thm]{Proposition}
\theoremstyle{definition}
\newtheorem{defi}[thm]{Definition}
 \newcommand{\floor}[1]{\left\lfloor #1 \right\rfloor}
\def\1{{\mathchoice {1\mskip-4mu\mathrm l}      
{1\mskip-4mu\mathrm l}
{1\mskip-4.5mu\mathrm l} {1\mskip-5mu\mathrm l}}}
\newcommand{\ssup}[1] {{{\scriptscriptstyle{({#1}})}}}
\def\comment#1{}
\def\olra{\overleftrightarrow}
\def\ti{\to\infty}
\def\tr{\text{tr}}
\newcommand{\eps}{\varepsilon}
\def\parent#1{#1^{-1}}
\def\bi{\begin{itemize}}
\def\ei{\end{itemize}}
\def\bbE{\mathbb{E}}
\def\bbP{\mathbb{P}}
\def\eps{\epsilon}
\def\be{\begin{equation}}
\def\ee{\end{equation}}
\def\bea{\begin{eqnarray}}
\def\eea{\end{eqnarray}}
\def\nn{\nonumber}
\def\ff{\infty}
\def\({\left(}
\def\){\right)}
\def\[{\left[}
\def\]{\right]}
\def\lb{\left|}
\def\rb{\right|}
\def\la{\langle}
\def\ra{\rangle}
\def\ID{\text{ID}}
\def\Spec{\mu}
\def\emp{{\tiny\varnothing}}
\def\r{\mathcal{R}}
\def\cc{\mathcal{C}}
\def\kk{\mathcal{K}}
\def\ss{\mathcal{S}}
\def\Roo{{\R\backslash\{0\}}}
\def\Fcal{\mathcal{F}}
\def\Pil{\lambda_\Pi}
\newcommand{\N}{\mathbb{N}}
\def\N{\mathbb{N}}
\def\R{\mathbb{R}}
\def\Z{\mathbb{Z}}
\def\C{\mathbb{C}}
\def\L{\mathcal{L}}
\def \P{\mathbf{P}}
\def \E{\mathbb{E}}
\newcommand{\Hcal}{\mathcal H}
\def \CC{{\cal C}}
\def \DD{{\cal D}}
\def \EE{{\bf E}}
\def \GG{{\cal G}}
\def \TT{\mathbb{T}}
\begin{document}

\title{On the speed and spectrum of mean-field random walks among random conductances}

\author{Andrea Collevecchio$^{\,\rm 1}$ and Paul Jung$^{\,\rm 2}$\\
{\footnotesize{$^{\rm 1}$Department of Mathematical Sciences, Monash University, Clayton, VIC 3800, Australia}}\\
{\footnotesize{$^{\rm 2}$Department of Mathematical Sciences, KAIST, Daejeon, South Korea}}}

\maketitle \abstract{We study random walk among random conductance (RWRC) on complete graphs with $n$ vertices. The conductances are i.i.d. and the sum of  conductances emanating from a single vertex asymptotically has an infinitely divisible distribution corresponding to a L\'evy subordinator with infinite mass at $0$. We show that, under suitable conditions, the empirical spectral distribution of the random transition matrix associated to the RWRC converges weakly, as $n\to\ff$, to a symmetric deterministic measure on $[-1,1]$, in probability with respect to the randomness of the conductances.  In short time scales, the limiting underlying graph of the RWRC is a Poisson Weighted Infinite Tree, and we analyze the RWRC on this limiting tree. In particular, we show that the transient RWRC exhibits a phase transition in that it has positive or {weakly zero speed} when the mean of the largest conductance is finite or infinite, respectively. 
} \vspace{5mm}

\textit{Keywords:}   empirical spectral distribution, speed, rate of escape, Poisson  Weighted Infinite Tree, random conductance model
\\

\section{Introduction}

In \cite{bordenave2011spectrum}, the limiting spectral distribution  of generators for Markov chains (Markov under a quenched measure) on randomly weighted complete graphs $\(G_n, n\in\N\)$ with $n$ vertices was studied where the weights were
interpreted as conductances across the edges.
Since models on complete graphs are considered mean-field models, the model of \cite{bordenave2011spectrum} is precisely a mean-field version of the so-called {\it random walk among random conductances} (RWRC) model or more succinctly, random conductance model.  See for instance, \cite{biskup2011recent} for an overview of the RWRC on $\Z^d$.

The model of \cite{bordenave2011spectrum} employs i.i.d. heavy-tailed positive weights, scaled by $n^{-1/\alpha}$, on the edges of the complete graph. Using this scaling, the conductances are in the domain of attraction of an $\alpha$-stable law as $n\to\infty$.
 For all $\alpha<2$,  viewing the edge-length as the inverse of the conductance on a given edge,  \cite{bordenave2011spectrum}  showed that the resulting weighted complete graphs converge, as $n\to\infty$, in a ``local weak sense'' (see \cite{benjamini2001recurrence, aldous2004objective})  to a version of Aldous'
Poisson Weighted Infinite Tree which is known more simply as the PWIT (this topology is coherent with physicists' cavity method which treats the scaling limit of complete graph as trees).

When couched in the mean-field i.i.d. setting, the phenomenon of convergence to a limiting infinite tree graph in fact requires the conductances to be heavy-tailed and scaled by $n^{-1/\alpha}$. Therefore, in this setting one may reasonably say that the scaling limit and thermodynamic limit of the finite graphs refer to the same limiting object. In particular, simply taking a thermodynamic limit without simultaneously scaling leads to a nonsense object.  On the other hand,  if one alternatively uses the absolute value of Gaussian weights (the natural extension to $\alpha=2$) as one's conductances, then the proper rescaling produces a degenerate graph in which all edge-lengths are infinite in the scaling limit. It was noted in \cite{ jung2018levy} that if one relaxes the i.i.d. requirement on conductances to the weaker condition of i.i.d. conductances for each fixed $n$, then one may obtain limiting graphs with conductances associated to infinitely divisible laws, rather than just $\alpha$-stable laws.  

In this work we analyze the limiting spectrum of the RWRC on sequences of finite weighted complete graphs whose local weak limit is a generalized PWIT. We also prove a phase transition in the speed of the transient RWRC on these generalized PWITs. {In particular, up to a mild assumption, we show that there is positive speed if the maximum conductance emanating from a given vertex has finite first moment and {weakly zero speed if this maximum conductance has infinite first moment. Here, `weakly' refers to the use of  $\liminf$ in place of a proper limit.} We will see that
the zero speed regime is reminiscent of the Bouchaud Trap Model (see \cite{bouchaud1992weak, benarous2006, fontes2008k}) except that we are trapped at an edge rather than a vertex. Our results on the speed of the RWRC can be compared to those of \cite{gantert2012random}. 

It should be noted that the scaling described in the previous paragraph gives a short time-scale result ($n$ goes to infinity, then $t$ goes to infinity) for the speed and, heuristically, a long time-scale result ($t$ goes to infinity, then $n$ goes to infinity) for the spectrum. 


The outline for the rest of the paper is as follows. We describe the model in the next section, and prove the convergence of the spectral distribution in the Section \ref{sec:spectrum}. Section \ref{sec:speed} covers the speed of the RWRC on the PWIT. 

\section{The Model}
Recall that an infinite divisibility probability measure $\mu$ associated to a subordinator, with no drift component, has a  L\'evy exponent $\Psi$ which is defined by
$$e^{\Psi(\theta)}:= \int_{\R} e^{i\theta x} \mu(dx) \quad\text{for } \theta\in\R.$$
We refer the reader to \cite{kyprianou2006introductory} or \cite{kallenberg2002foundations} for more details.

The  ``driftless'' infinitely divisible measure $\mu$ is supported on $(0,\infty)$ and has distribution $\ID(\Pi)$ whenever the exponent corresponds to a positive L\'evy measure
$\Pi$ and takes the form
 \begin{equation}\nn
\Psi(\theta):=\int_{(0,\infty)} (e^{i\theta {{} x}}-1)\, \Pi(dx)\,,
\end{equation}
where
$\Pi(dx)$ satisfies
\begin{equation}\label{subordinatormeas}
\int_{(0,\infty) } (1\wedge x)\,\Pi(dx)<\infty.
\end{equation}
We remark that this condition, for positive infinitely divisible  distributions, is different from the condition for general infinitely divisible  distributions which use the integral kernel $1\wedge x^2$ instead.  As is well known, the use of $1\wedge x$ here guarantees that the cumulative jumps of the associated subordinator, over finite time intervals, remains summable.   In addition to \eqref{subordinatormeas}, we will also assume that the L\'evy measure $\Pi$ is infinite.

{\bf Mean-field RWRCs (finitely many vertices):}
Let $G_n$ be the complete graph with vertex and edge sets $(V_n,E_n)$ where $V_n=\{1,\ldots,n\}$ and $E_n=\{e_{ij}, 1\le i<j\le n\}$.
The conductances on  the edges  $\{e_{ij}\}$ are i.i.d. and the conductance on edge $e_{ij}$ is denoted $\cc_n(i,j)=\cc_n(j,i)$.

We assume that conductances on edges adjacent a fixed vertex $i$ are positive and satisfy the following distributional limit property
\begin{equation}\label{summable condition}
\lim_{n\to\infty}\sum_{j=1}^{n} \cc_n(i,j)  \ \text{ is }\
\ID(\Pi) \text{ with }\|\Pi\|=\infty.
\end{equation}
As is well known, the infinite mass portion of $\Pi$  must be in a neighborhood around zero, and $\Pi$  must also satisfy \eqref{subordinatormeas}.
In particular, \eqref{summable condition} implies that for each $i$, $\{\cc_n(i,j), j\in\N\}$ asymptotically look like a Poisson point process with intensity $\Pi$.
We do not consider in this work,
the case where $\|\Pi\|<\infty$, but it should be noted that the scaling limits of the graphs in these cases are just weighted Galton-Watson trees (see the following subsection).

Using these conductances we see that the (random) Markov kernel defined by
\be\label{def:k}
\kk_n(i,j):=\cc_n(i,j)/\rho_n(i), \quad
\rho_n(i):=\sum_{j=1}^n \cc_n(i,j),
\ee
is reversible with  respect to the measure $\sum_{i\in V_n}\rho_n(i)\delta_i$ since
$$\rho_n(i)\kk_n(i,j)=\rho_n(j)\kk_n(j,i).$$

\subsection{Scaling limits of weighted complete graphs: PWITs}\label{sec:pwit}

Let us review the definition of an infinite graph with a  PWIT$(\Pil)$ distribution. Start with a
single root vertex $\emp$ with an infinite number of (first
generation) children indexed by $\N$. The weight on the edge to the
$k$th child  is the $k$th arrival $\r_\ff(\emp,k)$ (ordered from {smallest to biggest}) of a Poisson process on $\Roo$
with some intensity ${\lambda_{\Pi}}$. The weight represents the resistance across the edge and can also be thought of as a local distance function.
 Note that the resistances emanating from $\emp$ which are less than any fixed $\eps>0$ are independent, or equivalently, the conductances emanating from $\emp$ which are above any $\eps>0$ are independent.

In our situation the intensity $\Pil$
is derived from the L\'evy measure $\Pi$ on $(0,\infty)$ by inverting:
\be \label{pi} \Pil\{x:1/x\in B\}:=\Pi(B). \ee
For example, if
$\Pi(dx)$ is mutually absolutely continuous with respect to Lebesgue measure with density $f_\Pi(x)dx$ then
$\Pil(dx)$ is also absolutely continuous  {with respect to the Lebesgue measure and its  density is }
$x^{-2}f_\Pi(1/x)dx$ where $x^{-2}$ is the change-of-measure factor. {Since the infinite mass portion of $\Pi$ is near $0$, the infinite mass portion of $\Pil$ is near $\ff$; this indicates that most offspring of any given vertex will lie across edges of high resistance, in the sense that all but finitely many of these resistances will be more than any fixed positive value.}

If $G$ has a root at $\emp$ we write $G[\emp]$ for the rooted graph
with (random) weights assigned to each edge. Slightly
abusing notation, we denote the subgraph of a PWIT($\Pil$) formed by
the root $\emp$, its children, and the weighted edges in between, by
$\N[\emp]$.

\includegraphics[scale=.35]{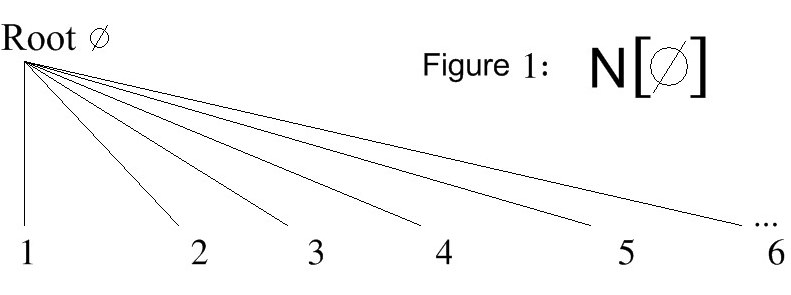}

We continue now with other generations. Every vertex $v$ in
generation $g\ge 1$ is given an infinite number of children indexed
by $\N$ forming the subgraph $\N[v]$. We denote the collection of all the $\N[v]$ for $v$ in generation $g-1$ by $\N^g$.  Thus the vertex set $V_\infty$ is
\be
\N^F:=\bigcup_{g\ge 0} \N^g     
\ee
where $\N^0=\emp$.
The weights on edges to children in generation $g+1$, from some
fixed vertex $v$ in generation $g$, are found by repeating the
procedure for the weights in the first generation, namely according
to the points of an independent Poisson random measure on $(0,\ff)$ with
intensity $\Pil(dx)$.

\vspace{3mm}
\includegraphics[scale=.35]{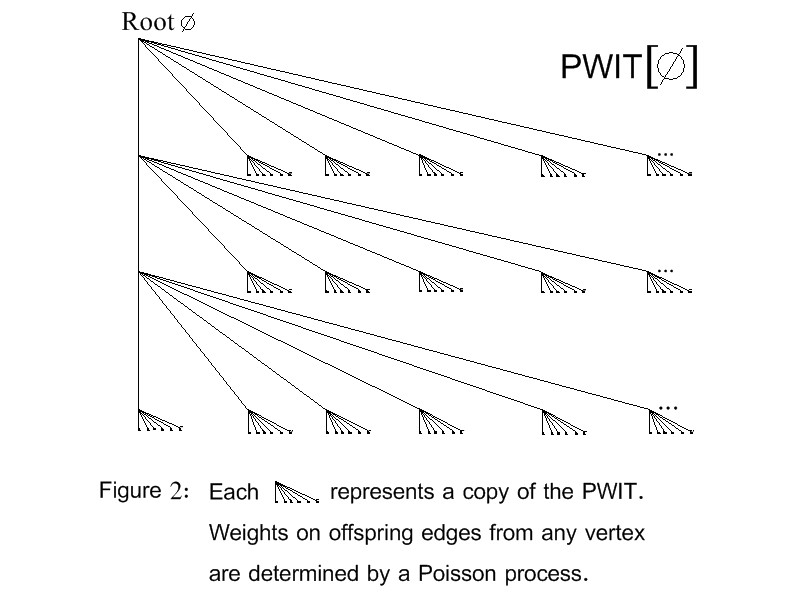}

If the resistance (or local distance function) on edge $e_{ij}$ of the finite graph $G_n[1]=(V_n,E_n)$ is taken to be $\r_n(i,j):=1/\cc_n(i,j)$ and the vertex $1$ is always chosen as the root of $G_n$, then
it is known that $(G_n[1], n\ge 1)$ converges in the local weak sense. In particular, this is convergence in the Benjamini-Schramm topology which turns the space of weighted rooted graphs into a Polish space so that convergence in distribution also makes sense.
 The limit of the sequence $(G_n[1], n\ge 1)$
 is a rooted random graph $G_\infty[\emp]$ which has a PWIT($\Pil$) distribution (see \cite[Prop 4.5]{jung2018levy}), and here $1$ is identified with $\emp$; moreover, since $\|\Pi\|=\ff$, the graph is infinite. As in \cite[Section 2.5]{bordenave2011spectrum} we will therefore think of $V_n$ as being (randomly) embedded in $V_\infty$ in such a way that the graphs converge in a local weak sense.
Note that the weights or resistances on edge $e_{vw}$ of $G_\infty[\emp]$ can be thought of as coming from some infinite conductance matrix
 \be\label{def:inf conductance matrix}
\cc_{\infty}(v,w) =\la \delta_v, \cc_{\infty}\delta_w \ra:=
\begin{cases}
& 1/\r_{\ff}(v,w)\quad \text{if } v\sim w\\
& 0 \quad\quad \quad \quad\quad\  \text{otherwise}.
\end{cases}
\ee

Before moving on let us give two important examples.

\noindent {\bf Examples:}
\begin{enumerate}
\item {\it $\alpha$-stable subordinator PWITs:} The  L\'evy measures take form  $C x^{-1-\alpha}\1_{x>0} dx$ where $\alpha\in(0,1)$, so the conductances $\{\cc_n(j,k)\}$ are in the domain of attraction of  a positive $\alpha$-stable distribution, i.e., a positive distribution such that the distribution function has tail $1-F(x)=L(x)x^{-\alpha}$ where $L$ is a slowly varying function and $\alpha\in(0,1)$. The matrix of conductances has entries which are i.i.d. up to the symmetry condition $\cc_n(j,k)=\cc_n(k,j)$. This special case was studied in \cite{bordenave2011spectrum}. This special case also corresponds to a mean-field distance model in dimension $d$ equal to $\alpha$, when one considers only nearest neighbors of the PWIT (see \cite[Sec. 4.1]{aldous2004objective}). 
\item {\it Tempered $\alpha$-stable subordinator PWITs:} The L\'evy measure is $C x^{-1-\alpha} e^{-x^p} \1_{x>0} dx$ where $\alpha\in(0,1), p>0$. This a tempered version of the first example, and when $p=1$ is a natural continuation of the Gamma$(\alpha,\beta)$-process (whose L\'evy measure is $\frac{\alpha^\beta}{\Gamma(\beta)}x^{\beta-1}e^{-\alpha x} \1_{x>0}  dx$) to negative values of $\beta$.  It corresponds to a class of  infinitely divisible distributions which has gained popularity since it provides good fits to actual data in the actuarial sciences, biostatistics, finance, and physics (see \cite{grabchak2016tempered}).
\end{enumerate}

{\bf Mean-field RWRCs (infinitely many vertices):}
We are now in a position to extend the definition in \eqref{def:k} to a  limiting Markov transition matrix $\kk_\infty$.  For its spectral analysis, we view $\kk_\infty$
as an operator on $\ell^2(V_\infty)$ where $V_\infty=\N^F$ is the vertex set of a PWIT($\Pil$). In particular, let $\DD\subset \ell^2(V_\ff)$ be the set of vectors
with finite support so that $\DD$ forms a core. For $v,w\in\N^F$ we define (using the convention that the second vector is a ``column'')
\be
\kk_\infty(v,w)=\langle\delta_v,\kk_\infty\delta_w\rangle:=
\begin{cases}
& \cc_{\infty}(v,w)/\rho_\infty(v)\quad \text{if } v\sim w\\
& 0 \quad\quad \quad \quad\quad\  \text{otherwise}
\end{cases}
\ee
and $\rho_\infty(v):=\sum_{w\in V_\infty} \cc_\ff(v,w)$ which is almost surely finite by condition \eqref{subordinatormeas}. In particular, if $v$ is the $k$th child of its parent in the construction of $G_\ff[\emp]$, then
the random variable $\rho_\infty(v)$ has a distribution equal to the convolution of ID$(\Pi)$ with the distribution of the $k$th arrival (this time ordered from biggest to smallest) of a Poisson process on $(0,\ff)$ with intensity $\Pi$. In the special case of the root, $\rho_\infty(\emp)$ simply has an ID$(\Pi)$ distribution.
 The random operator above is the transition matrix of a (random) Markov chain $(X_m)_{m\ge 0}$ on the limiting PWIT($\Pil$).

The operator $\kk_\infty$ is not symmetric, but becomes symmetric in the space $L^2(V_\ff, \rho_\ff)$ with inner product
\begin{equation}\label{eq:bdd op}
\langle \phi , \psi \rangle_\rho:=\sum_{v\in V_\ff}\rho_\ff(v)\phi(v)\psi(v).
\end{equation}
Under this inner product,  Cauchy-Schwarz shows that the operator norm is bounded by 1 and thus $\kk_\ff$ is also self-adjoint, 
\begin{align*}
\langle\kk_\infty\phi,\kk_\infty\phi\rangle_\rho&= \sum_{v\in V_\ff} \rho_\ff(v) \left|\sum_{w\in V_\ff} \kk_\ff(v,w) \phi(w)\right|^2\\
&\le \sum_{v\in V_\ff} \rho_\ff(v) \sum_{w\in V_\ff} \kk_\ff(v,w) \phi(w)^2\\
&=\sum_{w\in V_\ff} \rho_\ff(w) \phi(w)^2 = \langle\phi,\phi\rangle_\rho.
\end{align*}
\section{The limiting spectrum}\label{sec:spectrum}

Recall that the empirical
spectral distribution (ESD) of an $n\times n$ matrix $A_n$ is defined as
\be \label{esd}
\frac 1 n \sum_{j=1}^{n} \delta_{\lambda_j(A_n)}
\ee
where $\{\lambda_j(A_n)\}_{j=1}^{n}$ are the eigenvalues { of $A_n$}.
In this section we show that the arguments of \cite{bordenave2011spectrum} concerning the limiting empirical spectral distribution (LSD) of $\kk_\ff$ extend to the more general setting
of infinitely divisible mean-field RWRCs.

\begin{thm}[Limiting empirical spectral distribution of mean-field RWRCs]\label{thm:spectrum}
Suppose \eqref{summable condition} holds. Then there exists a symmetric deterministic measure $\mu_{\kk_\infty}$ supported on $[-1,1]$ depending only on $\Pi$ such that we have the following convergence in probability of random probability measures
\be\nn
\mu_{\kk_n}:=\frac{1}{n}\sum_{k=1}^n\delta_{\lambda_k(\kk_n)}\stackrel{n\to\infty}{\Longrightarrow} \mu_{\kk_\infty}.
\ee
where as usual, $\Longrightarrow$ denotes weak convergence of probability measures.
\end{thm}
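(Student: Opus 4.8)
The plan is to reduce the non-symmetric kernel to a symmetric matrix, transport the problem to the limiting PWIT via local weak convergence, and then run the resolvent-based objective method exactly as in \cite{bordenave2011spectrum}; the only genuinely new input is the convergence to the \emph{generalized} PWIT supplied by \cite[Prop 4.5]{jung2018levy}.

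\textbf{Symmetrization.} Writing $\kk_n=D_n^{-1}C_n$ with $D_n=\mathrm{diag}(\rho_n)$ and $C_n=(\cc_n(i,j))$, conjugation by $D_n^{1/2}$ produces the symmetric matrix $S_n:=D_n^{-1/2}C_nD_n^{-1/2}$, whose entries are $\cc_n(i,j)/\sqrt{\rho_n(i)\rho_n(j)}$. Since $S_n$ is similar to $\kk_n$ it has the same eigenvalues, so $\mu_{\kk_n}=\mu_{S_n}$, and it suffices to treat the symmetric ensemble $S_n$. The Cauchy--Schwarz computation already displayed for $\kk_\ff$ gives $\|S_n\|\le 1$; moreover each entry lies in $[0,1]$ (because $\cc_n(i,j)\le\rho_n(i)\wedge\rho_n(j)$), so the spectrum sits in $[-1,1]$, a compactness I would use throughout.

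\textbf{Convergence of the mean and symmetry of the limit.} By \cite[Prop 4.5]{jung2018levy} the rooted weighted graphs $(G_n[1],n\ge 1)$ converge in the local weak (Benjamini--Schramm) sense to $\mathrm{PWIT}(\Pil)$, and the symmetrized networks converge along with them to the corresponding symmetric operator $S_\infty$ on the tree, which is bounded by the same estimate. Writing the expected ESD as $\E[\mu_{S_n}]=\E[\mu_{o_n}^{S_n}]$, where $o_n$ is a uniform root and $\mu_v^{A}$ is the spectral measure at $v$ (so $\int x^k\,d\mu_v^{A}=\langle\delta_v,A^k\delta_v\rangle$), I would run the objective method of \cite{bordenave2011spectrum} on the diagonal resolvent $z\mapsto\langle\delta_v,(S-z)^{-1}\delta_v\rangle$, $z\in\C\setminus\R$, which is bounded by $1/|\mathrm{Im}\,z|$ and, because $\|S\|\le 1$, is approximable to arbitrary precision by the local weighted structure. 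Its key content is that short cycles and collisions in the complete graph contribute negligibly, so the limit is the \emph{tree} operator; this yields $\E[m_n(z)]\to m_\infty(z):=\E[\langle\delta_\emp,(S_\infty-z)^{-1}\delta_\emp\rangle]$, the Stieltjes transform of the deterministic measure $\mu_{\kk_\infty}:=\E[\mu_\emp^{S_\infty}]$. Symmetry of $\mu_{\kk_\infty}$ is then free from the tree (bipartite) structure: the sign flip $(U\psi)(v)=(-1)^{\mathrm{gen}(v)}\psi(v)$ satisfies $US_\infty U=-S_\infty$ and fixes $\delta_\emp$, forcing $\langle\delta_\emp,S_\infty^k\delta_\emp\rangle=0$ for all odd $k$, hence $\mu_\emp^{S_\infty}$ and $\mu_{\kk_\infty}$ are symmetric about $0$.

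\textbf{Concentration and conclusion.} To upgrade convergence of $\E[\mu_{S_n}]$ to convergence in probability it suffices to show $\Var(m_n(z))\to 0$ for each $z\in\C^+$. I would expose the vertices' conductance-rows one at a time and bound the resulting martingale differences via the resolvent identity: replacing a single row/column of $S_n$ perturbs $\tr\,(S_n-z)^{-1}$ by $O(1/n)$ with a constant depending only on $|\mathrm{Im}\,z|$, \emph{uniformly in the magnitudes of the conductances}. Azuma's inequality then gives $m_n(z)\to m_\infty(z)$ in probability for each $z$, and since the limit is the Stieltjes transform of a probability measure on the compact set $[-1,1]$ this is equivalent to the asserted $\mu_{\kk_n}=\mu_{S_n}\Longrightarrow\mu_{\kk_\infty}$ in probability. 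The main obstacle is precisely this concentration step combined with checking that the objective-method continuity survives the passage from the $\alpha$-stable case to a general $\Pi$: because the weights are unbounded and heavy-tailed, the bounded-differences estimate must be run on the resolvent, whose diagonal entries are controlled away from the real axis, rather than on moments or weights directly. This insensitivity to weight magnitudes — already the engine of \cite{bordenave2011spectrum} — is exactly what makes the argument robust enough to cover every $\Pi$ satisfying \eqref{summable condition} and \eqref{subordinatormeas}.
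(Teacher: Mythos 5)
Your symmetrization, the passage to the PWIT via local weak convergence, the resolvent computation of the limit of $\E[\mu_{\ss_n}]$, and the symmetry argument all match the paper in substance (the paper deduces symmetry from the vanishing of odd-step return probabilities on a tree, which is the probabilistic twin of your sign-flip unitary; both are fine). The genuine divergence --- and the gap --- is in the concentration step. You propose to expose the conductance rows one at a time and to control the martingale differences of $\frac1n\tr\,(\ss_n-z)^{-1}$ by an $O(1/n)$ rank-type bound coming from ``replacing a single row and column''. But resampling the conductances $\{\cc_n(i,j)\}_{j}$ attached to vertex $i$ does \emph{not} produce a bounded-rank perturbation of $\ss_n$: every normalizer $\rho_n(j)=\sum_k\cc_n(j,k)$ contains the term $\cc_n(j,i)=\cc_n(i,j)$, so every entry $\ss_n(j,k)=\cc_n(j,k)/\sqrt{\rho_n(j)\rho_n(k)}$ changes, and the standard resolvent rank inequality does not apply. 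Nor is the perturbation of $\rho_n(j)$ uniformly negligible in this regime: the whole point of condition \eqref{summable condition} is that a single conductance $\cc_n(i,j)$ can carry a non-vanishing fraction of $\rho_n(j)$, so for a random (Poisson-many) set of indices $j$ the corresponding rows of $\ss_n$ move macroscopically when row $i$ is resampled. Making Azuma work here would require a genuinely new perturbation estimate that you have not supplied.

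The paper sidesteps this entirely. By exchangeability it writes the variance of $\frac1n\sum_k R_z^{(n)}(k,k)$ as a $\frac{1}{n(\mathrm{Im}(z))^2}$ diagonal term plus the covariance of $R_z^{(n)}(1,1)$ and $R_z^{(n)}(2,2)$, and it kills that covariance using the second half of Lemma \ref{lem:operator conv}: the direct sums $\ss_n\oplus\ss_n$ rooted at the two distinct vertices $1$ and $2$ converge locally to two \emph{independent} copies $\ss_\infty\oplus\ss_\infty'$, so the two diagonal resolvent entries are asymptotically independent and, being bounded by $|\mathrm{Im}(z)|^{-1}$, asymptotically uncorrelated. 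If you replace your Azuma step by this two-root asymptotic-independence argument, the rest of your write-up goes through essentially as the paper's proof.
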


One reason for being interested in the limiting spectral distribution $\mu_{\kk_\infty}$ is that it contains all information about the expected $r$-step return probabilities of the limiting RWRC starting from a fixed vertex. To see this, one just takes the large $n$ limit in the  expected $r$-step return probability of the process $(X_m)_{m\ge 0}$ on $G_n$, from a given vertex, which is equivalent to the expected $r$th moment of $\mu_{\kk_n}$:
\begin{equation}\label{eq:return probs}
\E\[\langle\delta_1, \kk_n^r\delta_1\rangle\]=\E\left[\frac{1}{n}\sum_{j=1}^n \langle\delta_j, \kk_n^r\delta_j\rangle\right]=\E\left[\int_{-1}^1 x^r \mu_{\kk_n}(dx)\right].
\end{equation}
The first equality above follows from exchangeability while the second is just the Spectral Theorem.

%

%
        

The proof of Theorem \ref{thm:spectrum} follows the strategy of \cite[Theorem 1.4]{bordenave2011spectrum} which proves Theorem \ref{thm:spectrum} in the special cases of $\Pi(dx)=x^{-1-\alpha}dx, 0<\alpha<1$, corresponding to the L\'evy measures of $\alpha$-stable subordinators. They employ the following notion of {local operator convergence} for which we identify $v\in V$ with $\delta_v\in\ell^2(V)$ (for more details and for the intuition behind the following definition, we refer the reader to \cite{bordenave2011spectrum}).\\
\begin{defi}
A sequence of bounded operators
$(A_n)_{n\ge 1}$ on $\ell^2(V)$ is said to {\it locally converge at} $v\in V$ (or at $\delta_v\in\ell^2(V)$) to a closed linear operator $A_\infty$ on $\ell^2(V)$  at $u\in V$ if for some core $\DD$ of $A_\infty$, there is a sequence of bijections $\sigma_n:V\to V$ such that $\sigma_n(v)=u$ and for all $\phi\in \DD$
$$\sigma_n^{-1}A_n\sigma_n\phi \ \stackrel{n\to\infty}{\longrightarrow} \ A_\infty\phi \quad\text{in } \ell^2(V).$$
\end{defi}
As in \cite{bordenave2011spectrum}, we will use local operator convergence
of the renormalized symmetric operators
\bea \label{def:s matrix}
\ss_n(v,w):=\sqrt{\frac{\rho_n(v)}{\rho_n(w)}}\kk_n(v,w)=\frac{\cc_n(v,w)}{\sqrt{\rho_n(v)\rho_n(w)}} \quad \text{for }n\in\N\cup\{\infty\}.
\eea
 For  $n=\infty$, note that the random operator $\ss_\ff(v,w)$ is actually a.s. bounded and thus self-adjoint in $\ell^2(V_n)$. 
To see this, by \eqref{eq:bdd op} and the remark following it,  we need only show that $\ss_\ff$ has the same spectrum as $\kk_\ff$. It suffices to consider the (random) spaces $L^2(V_n, \rho_n)$, for all $n\in\N\cup\{\infty\}$,
with inner products 
\begin{equation*}
\langle \phi , \psi \rangle_\rho:=\sum_{v\in V_n}\rho_n(v)\phi(v)\psi(v),
\end{equation*}
and to realize that the map $\phi\mapsto\hat{\phi}$
with $$\hat{\phi}:=(\phi(1) \sqrt{\rho_n(1)}, \ldots, \phi(n) \sqrt{\rho_n(n)})$$
is almost surely an isometry from  $L^2(V_n,\rho_n)$ to $\ell^2(V_n)$.

We will also consider the $2n\times 2n$ matrices $\{\ss_n\oplus\ss_n, n\ge1\}$ and the direct sum operator
$\ss_\infty\oplus\ss_\infty'$ where $\ss_\infty$ and $\ss_\infty'$ are independent realizations of operators associated to a PWIT($\Pil$) through \eqref{def:inf conductance matrix} and \eqref{def:s matrix}. The matrices and the  operator are viewed as operating on a common space $\ell^2(V_\infty)\oplus \ell^2(V_\infty)$ by setting $\ss_n(v,w)=0$ whenever $\rho_n(v)=0$ (recall from the discussion above \eqref{def:inf conductance matrix} that $V_n$ is thought of as being randomly embedded in $V_\infty$).

The main tool used in the proof is the following lemma which is a generalization of Theorems 2.3 (iii) and 2.8 (iii) in \cite{bordenave2011spectrum}.

\begin{lem}\label{lem:operator conv}
Suppose \eqref{summable condition} holds. Then the operator sequence $(\ss_n)_{n\ge 1}$ locally converges, in distribution, at the vector $\delta_1\in \ell^2(V_\infty)$ to
$\ss_\infty$ at $\delta_\emp$. Moreover,  $(\ss_n\oplus \ss_n)_{n\ge 1}$ locally converges at the vector $(\delta_1,\delta_2)\in \ell^2(V_\infty)\oplus \ell^2(V_\infty)$ to
$\ss_\infty\oplus\ss_\infty'$ at $(\delta_\emp,\delta_\emp)$.
\end{lem}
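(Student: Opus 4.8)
The plan is to build a single probability space on which the finite graphs and the PWIT are coupled so that local neighborhoods converge, and then to prove the required $\ell^2$-convergence of the normalized operators on the finitely-supported core by splitting each image vector into a finite ``bulk'' piece that converges entrywise and a ``tail'' piece carried by the macroscopically many small conductances, which I control uniformly in $n$. First I would set up the coupling. Condition \eqref{summable condition} says precisely that for each fixed vertex the emanating conductances $\{\cc_n(i,j)\}_j$ form a null array whose sum converges to $\ID(\Pi)$, and by the classical correspondence between infinitely divisible laws and Poisson point processes this forces the point process of these conductances to converge to a $\PD$ with intensity $\Pi$. Combined with \cite[Prop 4.5]{jung2018levy} this gives local weak convergence $G_n[1]\Rightarrow$ PWIT$(\Pil)$, and via Skorokhod representation I would realize it on a common space, producing bijections $\sigma_n:V_\infty\to V_\infty$ with $\sigma_n(1)=\emp$ such that $\cc_n(\sigma_n v,\sigma_n w)\to\cc_\infty(v,w)$ almost surely for every fixed pair. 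The one delicate point is the normalization: for the root $\rho_n(\sigma_n\emp)\to\rho_\infty(\emp)$ is exactly \eqref{summable condition}, while for a child $v$ the sum $\rho_n(\sigma_n v)$ splits into the single shared edge to its parent (converging to the appropriate Poisson arrival) and a sum over $n-O(1)$ disjoint i.i.d.\ conductances (again $\ID(\Pi)$); since these two blocks live on disjoint edge sets they are asymptotically independent, reproducing the convolution law stated for $\rho_\infty(v)$ in Section \ref{sec:pwit}.

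Next I would prove the operator convergence itself. Take the core $\DD$ of finitely-supported vectors; by linearity it suffices to treat $\phi=\delta_u$, since $\ss_\infty\delta_u$ is supported on $u$ and its neighbors. Fixing a threshold $\eps>0$ that is almost surely a continuity point, the finitely many neighbors carrying conductance $\ge\eps$ converge entrywise by the first step, so the bulk contribution to the $\ell^2$ difference $\|\sigma_n^{-1}\ss_n\sigma_n\delta_u-\ss_\infty\delta_u\|$ vanishes. For the tail I would use the elementary bound $\rho_n(\sigma_n w)\ge\cc_n(\sigma_n w,\sigma_n u)$, giving
\begin{equation}\nn
\sum_{w:\,\cc_n(\sigma_n w,\sigma_n u)<\eps}\frac{\cc_n(\sigma_n w,\sigma_n u)^2}{\rho_n(\sigma_n w)\rho_n(\sigma_n u)}\le\frac{1}{\rho_n(\sigma_n u)}\sum_{w:\,\cc_n(\sigma_n w,\sigma_n u)<\eps}\cc_n(\sigma_n w,\sigma_n u)\xrightarrow{n\to\infty}\frac{1}{\rho_\infty(u)}\sum_{w:\,\cc_\infty(w,u)<\eps}\cc_\infty(w,u),
\end{equation}
whose right-hand side tends to $0$ as $\eps\to0$ because $\sum_{w}\cc_\infty(w,u)=\rho_\infty(u)<\infty$ by \eqref{subordinatormeas}; the same bound controls $\|\ss_\infty\delta_u\,\1_{\{\cc_\infty(\cdot,u)<\eps\}}\|^2$. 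Entrywise convergence of the bulk together with this uniform smallness of the tail yields the $\ell^2$-convergence, which is the first assertion.

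For the direct-sum statement I would show that the neighborhoods of the two roots $1$ and $2$ decouple in the limit. In the complete graph the only edge shared by the two explored neighborhoods is $\cc_n(1,2)$, a single i.i.d.\ conductance that is asymptotically negligible (equivalently, of large resistance), so with probability tending to one the neighborhoods are disjoint and, being built from disjoint blocks of i.i.d.\ conductances, asymptotically independent; this is the analogue of \cite[Thm 2.8]{bordenave2011spectrum} and identifies the limit as $\ss_\infty\oplus\ss_\infty'$ with independent factors. The local operator convergence on $\ell^2(V_\infty)\oplus\ell^2(V_\infty)$ then follows summand-by-summand from the single-root argument applied to each component.

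I expect the main obstacle to be the tail estimate: controlling, uniformly in $n$, the $\ell^2$-mass that the normalized operator $\ss_n$ places on the macroscopically many edges of vanishing conductance, and matching it to the limiting object. The reduction of this $\ell^2$ tail to an $\ell^1$ tail via $\rho_n(\sigma_n w)\ge\cc_n(\sigma_n w,\sigma_n u)$, combined with the subordinator summability $\int(1\wedge x)\,\Pi(dx)<\infty$, is exactly what makes the estimate close; the remaining work, namely checking that the entrywise Poisson convergence and the threshold truncation are compatible (continuity points and the matching of large conductances under $\sigma_n$), is routine but requires care.
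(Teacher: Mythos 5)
Your argument is correct and is essentially the argument the paper relies on: the paper omits the proof of Lemma \ref{lem:operator conv} entirely, deferring to Theorems 2.3(iii) and 2.8(iii) of \cite{bordenave2011spectrum} (and Section 4 of \cite{jung2018levy}), whose proofs use exactly your scheme --- a Skorokhod coupling realizing the local weak convergence to the PWIT, entrywise convergence of the finitely many large-conductance entries, and a tail bound equivalent to your $\ss_n(u,w)^2=\kk_n(u,w)\kk_n(w,u)\le\kk_n(u,w)$ reduction to an $\ell^1$ tail controlled by \eqref{subordinatormeas}, with the two-root decoupling via the single shared edge for the direct-sum statement. No substantive deviation or gap to report.
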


The idea behind the above crucial lemma is that the random graph sequence $(G_n[1])_{n\ge 1}$ associated to the operator sequence $(\ss_n)_{n\ge 1}$ converges in the local weak sense as mentioned above \eqref{def:inf conductance matrix}, and this local weak convergence translates into local convergence of the operators at the root vertex.
We omit the proof of the lemma since it follows that of Theorems 2.3 (iii) and 2.8 (iii) in \cite{bordenave2011spectrum} (see also Section 4 in \cite{jung2018levy}).

\begin{proof}[Proof of Theorem \ref{thm:spectrum}]
The proof is essentially the same as that of Theorem 1.4 in \cite{bordenave2011spectrum}.  Consider the resolvents
$$R_z^{(n)}=(\ss_n-zI_n)^{-1} \quad \text{for }n\in\N\cup\{\infty\}.$$
By the first statement in Lemma \ref{lem:operator conv} concerning the local convergence in distribution, of $(\ss_n)_{n\ge 1}$ at $\delta_v$, and Skorokhod's Representation Theorem, there is a probability space on which $(\ss_n)_{n\ge 1}$ locally converges at $\delta_v$ for each $v\in V_\ff$, almost surely. Let us for the time being work on this probability space (in order to use dominated convergence below) so that, 
by Theorem VIII.25(a) in \cite{reed1980methods}, we have convergence of $(\ss_n)_{n\ge 1}$ in the strong resolvent sense, almost surely.

Recall that the Stieltjies transform of a measure on $\R$ is defined as
\be
\label{def:Stieltjies} s_\mu(z):=
\int_{\R} \frac{\mu(dx)}{x-z}, \ \ \ z\in\C\backslash\R.
\ee
If we denote the ESD of $\ss_n$ (equal to that  of $\kk_n$ by the argument below \eqref{def:s matrix}) by  $\mu_{\ss_n}$,  then by Fubini's Theorem $$s_{\E\Spec_{\ss_n}}(z) = \E\[
s_{\Spec_{\ss_n}}(z)\].$$
Also, by exchangeability
\be
\label{eq:resolv identity} \E\[
s_{\Spec_{\ss_n}}(z)\]= \frac 1 {n} \E \[\tr (\ss_{n}-zI)^{-1}\]=\E\[
R_z^{(n)}(1,1)\].
\ee
Using$\frac{1}{|x-z|}\le \frac{1}{|\text{Im}(z)|}$, one can bound the modulus of the diagonal of the Green's function
$$|R_z^{(n)}(j,j)|\le |\text{Im} (z)|^{-1},\quad z\in\C\backslash\R.$$
Now by dominated convergence, and the strong resolvent convergence of $(\ss_n)_{n\ge 1}$ as applied to the function $\delta_\emp\in\ell(V_\infty)$ (note that the set of all Kronecker delta functions form a core for this space), we can take the limit on the right side of \eqref{eq:resolv identity} to see that
 $s_{\E\mu_{\ss_n}}(z)$ converges, 
  for all $z\in\C^+$, to
\be\label{eq:resolv1}
\E\[\langle \delta_\emp, (\ss_\infty-zI)^{-1}\delta_\emp \rangle\]
\ee
which is the limit of the right side of \eqref{eq:resolv identity}.

Recall \cite[Sec. VII.2 and VIII.3]{reed1980methods} that the spectral measure $\mu_{\emp}$ of the self-adjoint operator $\ss_\infty$  associated to the vector $\varphi$
is defined by the relation
$$
\langle \varphi, f(\ss_\infty)\varphi\rangle =:\int_\R f(x) \mu_\varphi(dx), \quad \text{for bounded continuous } f.
$$
Using this notion, we have that \eqref{eq:resolv1} is also
equal to the Stieltjes transform of  
 the expected spectral measure $\E\mu_{\emp}$ associated to $\delta_\emp$.  Since $\delta_\emp$ has norm one, this expected spectral measure is
in fact a probability measure.
Now, by Theorem 2.4.4 of \cite{anderson2010introduction}, convergence of the Stieltjes transforms of  $(\E\mu_{\ss_n})_{n\ge 1}$  implies weak convergence of
the sequence $(\E\mu_{\ss_n})_{n\ge 1}$ to the probability measure $\E\mu_{\emp}$. This proves convergence in expectation of the ESDs (see \cite[pg. 135]{tao2012topics} for a definition of this convergence).

To improve the convergence in expectation to convergence in probability, we need only
show that for all $z\in\C^+$
\be \label{eq:l1conv of esd}
\lim_{n\to\infty}\E\[|s_{\mu_{\ss_n}}(z)-s_{\E\mu_{\emp}}(z)|\]=0.
\ee
Note that
$$\E\[|s_{\mu_{\ss_n}}(z)-s_{\E\mu_{\emp}}(z)|\]\le\E\[|s_{\mu_{\ss_n}}(z)-s_{\E\mu_{\ss_n}}(z)|\]+|s_{\E\mu_{\ss_n}}(z)-s_{\E\mu_{\emp}}(z)|$$
and that the second term goes to zero for all $z\in\C^+$, as $n\to\infty$, by convergence in expectation.
The first term on the right side equals
$$\E\[\lb \frac 1 n \sum_{k=1}^n\[R^{(n)}_z(k,k)-\E R^{(n)}_z(k,k)\] \rb\].$$
By exchangeability,
\bea
\nn&&\E\[\lb \frac 1 n \sum_{k=1}^n\[R^{(n)}_z(k,k)-\E R^{(n)}_z(k,k)\] \rb^2\]\\
\nn&=&\frac 1 n\E\[\lb R^{(n)}_z(1,1)-\E\[ R^{(n)}_z(1,1)\]\rb^2\]+\frac{n(n-1)}{n^2}\E\[\(R^{(n)}_z(1,1)-\E R^{(n)}_z(1,1)\)\(R^{(n)}_z(2,2)-\E R^{(n)}_z(2,2)\)\]\\
\nn&\le&\frac{1}{n (\text{Im}(z))^2}+\frac{n(n-1)}{n^2}\E\[\(R^{(n)}_z(1,1)-\E R^{(n)}_z(1,1)\)\(R^{(n)}_z(2,2)-\E R^{(n)}_z(2,2)\)\].
\eea
By Lemma \ref{lem:operator conv}, $R^{(n)}_z(1,1)$ and $R^{(n)}_z(2,2)$ are asymptotically independent, due to the fact that $\ss_\infty$ and  $\ss_\infty'$ are independent (see the paragraph preceding the lemma).
Since these random variables are bounded, for fixed $z$, they are also asymptotically uncorrelated
so that \eqref{eq:l1conv of esd} follows.
We have thus shown the existence of a unique limiting measure $\mu_{\kk_\ff}$ in probability. 

Let us now show that $\mu_{\kk_\ff}$ is symmetric.  Using \eqref{eq:return probs}, we see that the $r$th moments of $\mu_{\kk_n}$
represent the ``random'' $r$-step return probability (``random'' due to the fact that the graph is random), 
which is the probability of starting from the root and returning to the root in $r$ steps.  Since $G_n[1]$ converges in the local weak sense to a tree, all odd moments of $\mu_{\kk_n}$ must vanish in probability, as $n\to\ff$, since one can never return to a given vertex on a tree in an odd number of steps.  Since the measures $\mu_{\kk_n}$ are all supported on $[-1,1]$, this implies symmetry of the limiting measure. 
\end{proof}

\section{Speed of the RWRC}\label{sec:speed}
Let $\mathbf{X} = (X_m)_{m \in \N}$ denote the RWRC defined on the PWIT.
In this section, for each vertex $v$ (or random vertex $X_m$) denote by $|v|$ (resp. $|X_m|$) its graph-distance from the root, that is the number of edges along the shortest path connecting $v$ to $\emp$. 
Denote by $\bf{P}_\omega$ the quenched measure, that is we fix the environment {$\omega$}  (i.e. the random conductances).  {Let $\bbP$ be the annealed measure, which is the semiproduct $\bf{P} \times \bf{P}_\omega$, where $\bf{P}$ is the measure describing the environment.} 

	 The process $\mathbf{X}$ is transient-- a fact which can be deduced  from  Proposition 2.1 in \cite{gantert2012random}. In order to apply that result, we reason as follows. Perform a percolation on the PWIT, where we delete the edges whose conductances are smaller than $\epsilon>0$ to be specified below. This percolation is supercritical, if $\epsilon$ is chosen small enough. Hence there exists an  infinite connected component, which is a subtree, which we denote by $\L$ (choosing one of the infinite components arbitrarily).   {If the process $\mathbf{X}$ never reaches $\L$ then it is easy to see that it is transient, and we have nothing to prove.  Suppose it reaches $\L$.} {Define the process $\mathbf{X}^{(\L)}$  to be the restriction of $\bf{X}$ observed only} when it takes  steps in $\L$, which may be finite or even empty set of steps. In the case $\mathbf{X}^{(\L)}$ has an infinite number of steps, Proposition 2.1 in \cite{gantert2012random} establishes that $\mathbf{X}^{(\L)}$ is transient implying transience of $\mathbf{X}$. Moreover, it is clear that $\mathbf{X}$ cannot be recurrent if $\mathbf{X}^{(\L)}$ consists of only a finite number of steps since it then visits each vertex in $\L$ finitely often.

We are now ready to state the main results of this section.

\begin{thm}[Positive speed under finite mean of the largest conductance]\label{finsecm} 
Let $\tilde\cc$ be the largest conductance  associated to an edge connecting the vertex $\emp$ to one of its offspring. Assume that  $\bbE[\tilde \cc]<\infty$ and $\bbE[(\tilde \cc)^{-1}]<\infty$. There exists a constant $s \in (0, 1]$  such that for any $\omega$ belonging to  a set of $\bf{P}$-measure one, we have 
$$ {\bf P}_\omega \Big(\lim_{m\to\ff} \frac{|X_m|}m =  s\Big)=1.$$
\end{thm}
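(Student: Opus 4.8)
The plan is to build a regeneration (renewal) structure for $\mathbf{X}$ and then apply a renewal law of large numbers. Being transient on a tree, $\mathbf{X}$ converges to a single ray of the PWIT and $|X_m|\to\ff$. I call a time $\tau$ a \emph{regeneration time} if $X_\tau$ is reached for the first time by a forward step (so $|X_{\tau-1}|=|X_\tau|-1$) and the edge between $X_\tau$ and its parent is never crossed again, i.e.\ $X_m$ stays in the subtree rooted at $X_\tau$ for all $m\ge\tau$. The structural fact that drives everything is that in a PWIT$(\lambda_\Pi)$ the subtrees hanging below distinct vertices are i.i.d.\ copies of a PWIT; consequently, conditionally on a regeneration at $\tau$, the future $(|X_{\tau+\cdot}|-|X_\tau|)$ is independent of the past with a law not depending on $\tau$. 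Hence the successive blocks $(\tau_{k+1}-\tau_k,\ |X_{\tau_{k+1}}|-|X_{\tau_k}|)$, $k\ge1$, are i.i.d.\ under the annealed measure $\bbP$.

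First I would show there are infinitely many regeneration times $\bbP$-almost surely. Each first passage to a new level is a trial for a regeneration, and the conditional probability of ``never backtracking'' is a.s.\ positive; by the i.i.d.\ subtree structure these trial-probabilities share a common law with positive mean, so their sum diverges and a conditional Borel--Cantelli argument forces infinitely many successes. The renewal law of large numbers then yields, $\bbP$-a.s.,
\be\nn
\frac{|X_m|}{m}\ \longrightarrow\ s:=\frac{\bbE\big[\,|X_{\tau_2}|-|X_{\tau_1}|\,\big]}{\bbE\big[\,\tau_2-\tau_1\,\big]}\qquad(m\to\ff),
\ee
provided the denominator is finite. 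Each block advances the level by at least one, so the numerator is $\ge1$; since $|X_m|\le m$ we have $s\le1$; thus $s\in(0,1]$ as soon as $\bbE[\tau_2-\tau_1]<\ff$. Because $s$ is deterministic, the quenched statement follows from the annealed one by Fubini: $\int \mathbf{P}_\omega(\lim_m|X_m|/m=s)\,d\mathbf{P}=\bbP(\lim_m|X_m|/m=s)=1$ forces $\mathbf{P}_\omega(\lim_m|X_m|/m=s)=1$ for $\mathbf{P}$-a.e.\ $\omega$.

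The hard part will be showing $\bbE[\tau_2-\tau_1]<\ff$, and this is exactly where the moment hypotheses enter. The block time equals the total local time accrued on the edges explored before the regeneration, and its dominant contribution is the \emph{edge-trapping} effect noted in the introduction: once the walk sits at the endpoints of an edge whose conductance dominates the competing conductances, it crosses that edge a geometric number of times before escaping, with mean of order (edge conductance)$/$(escape conductance). Using reversibility (Green's function and effective-resistance computations on the tree) I would bound the expected number of crossings of the dominant edge emanating from a vertex by a quantity controlled by $\tilde\cc$, so that $\bbE[\tilde\cc]<\ff$ prevents these traps from inflating the expected block time, while $\bbE[(\tilde\cc)^{-1}]<\ff$ supplies the matching lower bound on the escape conductances (equivalently on the forward-move probabilities) that keeps the crossing counts summable and the per-trial regeneration probability away from zero. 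Carrying out these local-time estimates---summing the expected crossing counts over the geometry of a block and proving the resulting series converges under the two moment conditions---is the technical heart of the argument; the i.i.d.\ block structure, the existence of infinitely many regenerations, and the annealed-to-quenched passage are then routine.
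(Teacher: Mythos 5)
There is a genuine gap, and it sits exactly at the load-bearing claim of your argument: that the successive regeneration blocks $(\tau_{k+1}-\tau_k,\,|X_{\tau_{k+1}}|-|X_{\tau_k}|)$ are i.i.d.\ under the annealed measure. In this model that claim fails (or at least is far from routine), for two reasons. First, the conductance $\cc(X_{\tau_k},\parent{X_{\tau_k}})$ on the edge joining two consecutive slabs is used by the walk on \emph{both} sides of the cut: it enters the transition probabilities at $\parent{X_{\tau_k}}$ (part of the previous block) and at $X_{\tau_k}$ (part of the next block), so consecutive blocks share a piece of the environment and are not independent. Second, the regeneration vertex is selected by the walk, so the parent-edge conductance at a regeneration point is size-biased by the event of being crossed exactly once and never recrossed; the subtree hanging at $X_{\tau_k}$ together with its parent edge is therefore \emph{not} a fresh PWIT, and the blocks need not even be identically distributed in the naive sense. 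This is precisely the obstruction the authors flag (together with the complication that every vertex has infinitely many children) as their reason for abandoning the regeneration route for positivity. Without the i.i.d.\ structure, your renewal law of large numbers, your formula for $s$, and your derivation of the constancy of $s$ all collapse; the moment estimates you sketch in the last paragraph would then be controlling the wrong quantity.

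The paper takes a different path on both halves. Existence and constancy of the speed are obtained not from renewal theory but from the ergodic theorem applied to the environment seen from the walker: the size-biased measure $\P_{sb}$, with density proportional to $\rho_\ff(\emp)$, is reversible for that chain (this is where $\bbE[\tilde\cc]<\ff$ enters, to make $\bbE[\rho_\ff(\emp)]<\ff$), and constancy follows from a separate independence argument along the times $H(n)$, not from an i.i.d.\ block decomposition. Positivity is then proved by contradiction in the style of Aid\'ekon: one shows $\bbE[T(n)]\le Kn$ by combining a $p$-th moment bound on $1/\beta_\omega(v)$ (Proposition \ref{impro}, where both moment hypotheses on $\tilde\cc$ are consumed), a bound $\bbE[G_n]\le Kn$ on the number of visited vertices (Lemma \ref{lemcn}), and a coupling that restores enough independence between ``whether $v$ is visited'' and ``how long the walk lingers below $v$'' (Lemma \ref{lem:tL}); Fatou's lemma then rules out $T(n)/n\to\ff$. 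Your intuition that the moment conditions control edge-trapping and escape probabilities is correct in spirit, but to make your outline into a proof you would have to either (a) prove that a suitably conditioned regeneration structure is stationary and ergodic (not i.i.d.) and still supports a law of large numbers with finite mean block length, or (b) switch to the environment-seen-from-the-particle plus Fatou argument as the paper does.
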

Our proof of Theorem \ref{finsecm}  first proves existence of the speed and then its positivity. For the existence proof we partly use arguments from \cite{gantert2012random} and \cite{lyons1995ergodic} (see also \cite{LP:book}). However, it seems difficult to adapt the argument from \cite{gantert2012random} with respect to the positivity of the speed\footnote{{In particular, in our model, the sequence of ``slabs'' formed by the regeneration points are not independent since the conductance on an edge which connects any two slabs creates a dependence structure. An additional complication in our setting is the fact that each vertex has infinitely many children.}}. Therefore, for positivity of the speed we use an argument inspired by work of Aid{\'e}kon, see \cite{aidekon2008transient}. This latter argument also gives information about the \lq rate of convergence\rq of the rescaled hitting times to the reciprocal of the speed, as stated in our next theorem.

		For $n \in \N$, let the hitting time of level $n$ be denoted  by $$T(n) := \inf \{ j \ge 0 \colon |X_j| =n\}$$ and  for any vertex $v$ set $$T_{v} := \inf \{ j \ge 0 \colon X_j =v\}.$$ For any vertex $v$ denote by $\parent{v}$ its parent. 

\begin{thm}[Limit theorems for hitting times]\label{Hittimes} 
Let $\tilde\cc$ be as above. Assume that $\bbE[\tilde \cc^{p}]<\infty$ and $\bbE[(\tilde \cc)^{-p}]<\infty$ for some $p >1$. {Then, for any $q \in (1, p)$, we have that 
$$\lim_{n \to \infty}  \E\left[\left(\frac{T(n)}{n} - \frac1 s \right)^q\right] = 0 .$$}

In order to prove a phase transition in the speed, we finally have that when the largest conductance has infinite mean, it acts like a trap for the random walk giving it weakly zero speed.

\end{thm}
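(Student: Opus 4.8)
The plan is to deduce the stated $L^q$ convergence from the almost sure speed result of Theorem~\ref{finsecm} together with a uniform moment bound on the hitting times. First I would record the almost sure statement. Since Theorem~\ref{finsecm} gives $|X_m|/m \to s$ with $s \in (0,1]$ for $\bf{P}$-a.e. environment and $\bf{P}_\omega$-a.s., and since transience (established above via \cite{gantert2012random}) forces $T(n)\to\infty$, evaluating the speed along the random subsequence $m=T(n)$ gives $n/T(n) = |X_{T(n)}|/T(n) \to s$, hence $T(n)/n \to 1/s$ almost surely. It then remains only to upgrade this to convergence in $L^q$. For this I would show that $\{(T(n)/n)^q\}_{n\ge1}$ is uniformly integrable: indeed $|T(n)/n - 1/s|^q \to 0$ a.s., and uniform integrability of $(T(n)/n)^q$ converts this into $L^1$ convergence of $|T(n)/n - 1/s|^q$, which is exactly the claim. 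Uniform integrability in turn follows from a uniform bound $\sup_n \E[(T(n)/n)^{q'}] < \infty$ for some $q' \in (q,p)$, since boundedness in the norm of $L^{q'/q}$ with $q'/q>1$ implies uniform integrability.

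The key reduction is that this uniform moment bound does not require controlling the dependence between successive excursions. Writing $T(n) = \sum_{k=1}^n \tau_k$ with $\tau_k := T(k) - T(k-1) \ge 0$ the time to advance from level $k-1$ to level $k$, Minkowski's inequality gives, for $q' \ge 1$,
\[
\big\| T(n) \big\|_{q'} \le \sum_{k=1}^n \| \tau_k \|_{q'} \le n \sup_{k\ge1} \| \tau_k \|_{q'},
\]
so that $\E[(T(n)/n)^{q'}] \le \big(\sup_k \|\tau_k\|_{q'}\big)^{q'}$ uniformly in $n$. Thus the whole theorem reduces to the single uniform estimate $\sup_{k\ge1}\E[\tau_k^{q'}] < \infty$ for some $q'\in(q,p)$. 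Because the conductance environment seen from a backbone vertex is stationary across levels (with only the mild level-dependent size-biasing of the root weight described below \eqref{def:inf conductance matrix}), it suffices to bound the $q'$-th moment of the advance time at a generic level in terms of $\bbE[\tilde\cc^{p}]$ and $\bbE[\tilde\cc^{-p}]$.

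The main obstacle, and the heart of the proof, is precisely this last estimate, namely bounding $\E[\tau_k^{q'}]$. During the interval $[T(k-1),T(k)]$ the walk may make deep excursions back toward the root through the shared part of the tree, and it may ultimately cross into level $k$ through any one of the level-$(k-1)$ vertices it visits, so $\tau_k$ is genuinely a functional of a large portion of the environment rather than of a single edge. I would decompose $\tau_k$ into the number of downward excursions before a successful forward crossing and the durations of those excursions. Transience provides geometric control on the number and depth of the downward excursions, while the two conductance-moment hypotheses control the per-excursion cost: $\bbE[\tilde\cc^{-p}]<\infty$ bounds the waiting time needed to make a forward step across a low-conductance edge, and $\bbE[\tilde\cc^{p}]<\infty$ bounds the time lost to the edge-trapping mechanism (repeated back-and-forth crossings of an atypically high-conductance edge) that is responsible for the zero-speed phase when $\bbE[\tilde\cc]=\infty$. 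Assembling these bounds, uniformly in $k$ by stationarity, yields $\sup_k\E[\tau_k^{q'}]<\infty$ and completes the argument.

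Finally, I would remark that the inter-slab dependence emphasized in the footnote to Theorem~\ref{finsecm} never enters this computation, since Minkowski's inequality tolerates arbitrary dependence among the $\tau_k$; that dependence would only matter if one sought matching lower bounds or an exact evaluation of the constant, whereas the value $1/s$ of the limit is already pinned down by the almost sure convergence inherited from the speed theorem.
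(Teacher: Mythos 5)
Your outer frame coincides with the paper's: both arguments take the almost sure convergence $T(n)/n\to 1/s$ for granted (inherited from Theorem~\ref{finsecm} via $|X_{T(n)}|/T(n)\to s$) and reduce the $L^q$ statement to uniform integrability of $(T(n)/n)^q$, which in turn follows from $\sup_n \E[(T(n)/n)^{q'}]<\infty$ for some $q'\in(q,p)$. The divergence, and the gap, is in how that uniform moment bound is obtained. You reduce it via Minkowski to $\sup_k \E[\tau_k^{q'}]<\infty$ for the level increments $\tau_k=T(k)-T(k-1)$, and then assert this "by stationarity" of the environment seen from a backbone vertex. That stationarity is not available: under the annealed measure the environment in the already-explored region, conditioned on the trajectory up to time $T(k-1)$, is biased (edges the walk has recrossed many times are more likely to carry large conductance), and the position $X_{T(k-1)}$ itself is a complicated functional of the environment. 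This is exactly the dependence problem flagged in the footnote to Theorem~\ref{finsecm} that prevents a naive regeneration/i.i.d.-slab argument. The trivial bound $\E[\tau_k^{q'}]\le\E[T(k)^{q'}]$ only gives $O(k^{q'})$, so the claimed uniform bound is genuinely nontrivial; your sketch of it (geometric control of backtracking depth plus "per-excursion cost" controlled by the moment hypotheses) restates what must be proved rather than proving it, and in particular does not address how the conditional law at time $T(k-1)$ is controlled.

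The paper avoids the per-level decomposition entirely. It bounds $T(n)\le 4\sum_{i=1}^{G_n}\widetilde L(v_i)$ over \emph{all} visited vertices up to level $n$, where $\widetilde L(v)$ from \eqref{def:wtL} is measurable with respect to the conductances strictly inside the subtree below $v$ and hence independent of the event $\{v \text{ is visited}\}$; it then groups the visited vertices into the subcritical Galton--Watson clusters of Lemma~\ref{lemcn} rooted at the low-conductance entry vertices $\mu_i k$ at each level, sets $D_{\mu k}=\sum_{v\in\mathcal G(\mu_i k)}\widetilde L(v)$, and closes the estimate with Jensen and H\"older using the geometric tail of the cluster sizes, Lemma~\ref{Lfinmom}, and the exponential tail of $g_i$. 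That independence of $\widetilde L(v)$ from the visitation history is the engine that replaces the stationarity you invoke. To salvage your route you would either have to prove the uniform bound on $\E[\tau_k^{q'}]$ by essentially rebuilding this local-time/cluster machinery inside each excursion $[T(k-1),T(k)]$ together with a control of the conditional environment at $T(k-1)$, or abandon the increment decomposition and bound $\E[T(n)^{q'}]$ globally as the paper does.
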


\begin{thm}[Weakly zero speed under infinite mean of the largest conductance]\label{thinf} Let $\tilde{\cc}$ be as above.
 Assume that $\bbE[\tilde \cc] = \infty$ then, {\bf P}-a.s.,
 $$ {\bf P}_\omega \Big(\liminf_{m \to \infty} \frac{|X_m|}{m} = 0\Big) =1.$$
 \end{thm}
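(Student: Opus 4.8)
The plan is to show that the level hitting times $T(n)$ grow superlinearly along a subsequence, namely that $\limsup_{n\to\ff} T(n)/n = \ff$ almost surely, and to deduce the statement from this. This reduction suffices because $|X_{T(n)}| = n$ by definition of $T(n)$, so along any subsequence $n_j$ with $T(n_j)/n_j \to \ff$ one has, at the times $m_j := T(n_j)$, that $|X_{m_j}|/m_j = n_j/T(n_j) \to 0$, forcing $\liminf_{m\to\ff}|X_m|/m = 0$. It is enough to prove the superlinearity under the annealed measure $\bbP$: if $\bbP(\liminf_m |X_m|/m = 0)=1$, then since ${\bf P}_\omega(\,\cdot\,)\le 1$ and $\bbP=\int {\bf P}_\omega\,d{\bf P}(\omega)$, the quenched probability must equal $1$ for ${\bf P}$-a.e.\ $\omega$, which is the assertion of the theorem.

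To produce superlinear hitting times I would exploit edge-traps. Set $\tau_i := T(2i)$, let $R_i := X_{\tau_i}$ be the first vertex reached at level $2i$, let $u_i$ be the child of $R_i$ across the edge of largest conductance, and write $\tilde\cc_{R_i} := \cc_\ff(R_i,u_i)$ for that conductance. I would track the first excursion of the walk inside the pair $\{R_i,u_i\}$ after time $\tau_i$, of length $N_i := \sigma_i - \tau_i$, where $\sigma_i$ is the first exit from $\{R_i,u_i\}$. Since $R_i$ is at level $2i$ and $u_i$ at level $2i+1$, the walk can only reach level $2i+2$ by leaving this pair, so $N_i \le T(2i+2)=\tau_{i+1}$, and in particular $N_i$ is $\FF_{\tau_{i+1}}$-measurable. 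Restricted to $\{R_i,u_i\}$ the walk is a two-state chain returning to $R_i$ with probability $\tilde\cc_{R_i}^2/(\rho(R_i)\rho(u_i))$ per round-trip; hence $N_i$ is essentially a geometric variable of mean of order $\tilde\cc_{R_i}/(\rho(R_i)+\rho(u_i)-2\tilde\cc_{R_i})$, which is $\asymp \tilde\cc_{R_i}$ once the remaining local conductances at $R_i$ and $u_i$ are $O(1)$. The geometric tail then supplies constants $c_0,p_0>0$ with $\bbP(N_i \ge c_0\tilde\cc_{R_i}\mid \FF_{\tau_i}) \ge p_0$ on the event that these remaining conductances are bounded.

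The decisive structural input is that, exploring the PWIT in the order the walk discovers it, the conductances from $R_i$ to its children are revealed only at the instant $\tau_i$ of first arrival (the walk reaches $R_i$ through its parent edge, which was exposed earlier). By the recursive independence of the PWIT, these child-conductances form a fresh Poisson process of intensity $\Pi$ independent of $\FF_{\tau_i}$, so $\tilde\cc_{R_i}\ed\tilde\cc$ and is conditionally independent of the past, while any size-biasing coming from the walk's preference for large edges can only help. Combining this with the excursion bound gives, for every fixed $\lambda$, the estimate $\bbP(N_i \ge \lambda i \mid \FF_{\tau_i}) \ge p_0\,\bbP(\tilde\cc \ge c_1\lambda i)$ for some constant $c_1$, and since $\bbE[\tilde\cc]=\ff$ one has $\sum_i \bbP(\tilde\cc \ge c_1\lambda i)=\ff$. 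As $\{N_i \ge \lambda i\}\in\FF_{\tau_{i+1}}$ while the conditioning is on $\FF_{\tau_i}$, L\'evy's conditional Borel--Cantelli lemma yields $N_i \ge \lambda i$ infinitely often, a.s. Letting $\lambda\to\ff$ along integers gives $\limsup_i N_i/i = \ff$, and $T(2i+2)\ge N_i$ then gives $\limsup_n T(n)/n = \ff$, as required.

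The main obstacle is exactly the step just described: certifying that the trap depths $\tilde\cc_{R_i}$ encountered along the self-chosen, conductance-biased trajectory are genuine fresh draws of $\tilde\cc$ relative to the exploration filtration, while simultaneously securing a trapping probability $p_0$ bounded away from $0$ uniformly in $i$. This will require careful control of the auxiliary local conductances at $R_i$ and $u_i$ (in particular the possibly size-biased parent edge at $R_i$), restricting the Borel--Cantelli events to a high-probability event on which those stay bounded, so that the geometric trap mean is comparable to $\tilde\cc_{R_i}$ with the required uniform probability.
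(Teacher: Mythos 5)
Your overall architecture is sound and its two main ingredients match the paper's: the reduction from $\liminf_m |X_m|/m=0$ to $\limsup_n T(n)/n=\ff$ (together with the annealed-to-quenched step) is exactly how the paper concludes, and your two-vertex excursion estimate is precisely the paper's Proposition ``Large conductances are traps'', where the holding time in the pair $\{v,v_1\}$ dominates a geometric variable of mean of order $\cc(v,v_1)/M$ on an event $D$ requiring the competing conductances at $v$ and at $v_1$ to sum to at most $M$. Where you genuinely diverge is the final assembly: the paper introduces regeneration times (hitting times of levels visited exactly once), shows a regeneration increment has infinite mean because it dominates $S_v\1_{T_v<T(3)}$, and applies a law of large numbers to the increments; you instead run a conditional Borel--Cantelli argument over the first-entry vertices $R_i$ at levels $2i$. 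If completed, your route would arguably be cleaner, since it avoids the independence/stationarity issues for regeneration increments that the paper itself flags in a footnote.

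There is, however, a genuine gap, and it sits exactly where you flag it. Your bound $\bbP(N_i\ge\lambda i\mid\FF_{\tau_i})\ge p_0\,\bbP(\tilde\cc\ge c_1\lambda i)$ is asserted only ``once the remaining local conductances at $R_i$ and $u_i$ are $O(1)$''. The child-side quantities (the non-maximal conductances at $R_i$, the total conductance at $u_i$) are fresh and are $\le M$ with probability uniformly bounded below, so they can be absorbed into $p_0$. The parent edge $\cc(\parent{R_i},R_i)$ cannot: it is $\FF_{\tau_i}$-measurable, it is size-biased because the walk crosses edges with probability proportional to conductance, and under $\bbE[\tilde\cc]=\ff$ it is heavy-tailed; if $\cc(\parent{R_i},R_i)\gg\tilde\cc_{R_i}$ the walk leaves the pair $\{R_i,u_i\}$ immediately and the trap fails at that level. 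For your conditional Borel--Cantelli sum to diverge a.s.\ you would need the event $\{\cc(\parent{R_i},R_i)\le M\}$ to occur along a positive-density set of levels (a decreasing divergent series survives restriction to a positive-density subset, but not to an arbitrary infinite one), and nothing in your argument delivers this. The missing ingredient is the paper's Lemma~\ref{mainlem}: for $M$ large, the cluster of vertices reachable from a given vertex through edges of conductance $>M$ is a subcritical, hence a.s.\ finite, Galton--Watson tree, so within an a.s.\ finite (exponentially tight) number of further levels the walk must cross an edge of conductance $\le M$; redefining $R_i$ as the first such vertex at level $\ge 2i$ restores a uniform $p_0$ at the cost of bookkeeping on the random level offsets. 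Without this lemma or a substitute, the uniform trapping probability, and hence the divergence of the Borel--Cantelli sum, is not established.
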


  \noindent{\bf Remarks:}
  \begin{enumerate}

\item The condition $\bbE[(\tilde \cc)^{-1}]<\infty$ in Theorem \ref{finsecm} follows if 
\begin{equation*} \label{}
\bbE[(\tilde \cc)^{-1}] = \int_{0}^{\ff} \bbP( (\tilde \cc)^{-1} > x) = \int_0^{\infty} (1/x^2) \bbP( \tilde \cc < x) dx < \infty.
\end{equation*}
The conductances coming from a given vertex form a Poison process with intensity $\Pi(dx)$, so $\bbP( \tilde \cc < x) = \exp(-\Pi(x,\ff))$, and thus $\bbE[(\tilde \cc)^{-1}] < \infty$ if, and only if,
\begin{equation*} \label{}
\int_0^{\infty} (1/x^2) \exp(-\Pi(x,\ff)) dx < \ff.
\end{equation*}
 \item Similarly, one has $\bbE[\tilde \cc]<\infty$ if and only if $\int_0^\ff x \, \Pi(dx)<\ff$ since
$$\bbE[\tilde \cc] = \int_{0}^{\ff} 1-\exp(-\Pi(x,\ff))\, dx $$
and the right side is finite if and only if  $\int_0^\ff x \, \Pi(dx)<\ff$. 

\item Clearly, the first example in Section \ref{sec:pwit} satisfies the conditions of Theorem \ref{thinf} while the second example satisifes the conditions of Theorem \ref{finsecm}.  One can interpret this as  {\it traps} being removed in the $\alpha$-stable subordinator PWIT as a result of ``tempering'' the largest conductances.

\item
Recalling the earlier description of the PWIT, the edge-conductances to offspring of a given vertex $v$ are given by the arrivals of a Poisson process on $(0,\ff)$ with intensity measure $\Pi$. Thus, conditioned on being larger than $\epsilon$, these conductances are independent (there are finitely many by \eqref{subordinatormeas}).  This is an important property that allows us to connect our model to previously used proof techniques for random walks in random environments.  
  \end{enumerate}

\subsection{Existence of the speed in the finite mean regime}

We use the ergodic theorem to show existence of the speed, and a key part of this argument is finding a reversible probability measure for the environment as observed from the random walker. Our reversible measure is motivated by a similar measure in \cite{gantert2012random}, and it only exists under the finite mean condition $\bbE[\tilde \cc]<\infty$ which partly explains the dichotomy between Theorems \ref{finsecm} and \ref{thinf}. 

Recall that $G_\ff[v]$ is our tree with random edge-weights, rooted at $v$. The Markov operator $\kk_\ff$ extends in a natural way to a process which includes the environment observed by the walker. In particular, such a process on the space of weighted rooted trees, $\GG_{\star}$, is given by the transition operator
\begin{align*}
K f(G_\ff[v]):= \sum_{w:w\sim v} \frac{\cc_\ff(v,w)}{\rho_\ff(v)} f(G_\ff[w])
\end{align*}
for an appropriate class of functions.
If we size-bias $\P$ to get the probability measure $\P_{sb}$ on $\GG_\star$ with corresponding expectation
$$\EE_{sb}[f(G_\ff[v])]:=\frac{\EE\big[\rho_\ff(v) f(G_\ff[v])\big]}{\EE[\rho_\ff(v)]},$$
then it turns out that $K$ is reversible with respect to $\P_{sb}$. 

To prove reversibility, we use the involution invariance or mass transport principle for  $G_\ff[v]$ with respect to an infinite measure on $\GG_\star\times V$ (see \cite[Sec. 5]{aldous2004objective} or \cite[Ex. 9.7]{aldous2007processes}). In particular consider $\GG_\star\times V$ to be the set of weighted rooted trees with a {\it distinguished directed edge} (from the root), and use the infinite measure on $\GG_\star\times V$ which has marginal $\P$ on $\GG_\star$ and counting measure on the edges emanating from the root, $V$. By \cite[Sec. 5]{aldous2004objective}, we have for $f$ and $g$ in $L_2(\P)$
\begin{align*}
 \EE\[ \sum_{w:w\sim \emp} f(G_\ff[\emp]) \cc_\ff(\emp,w) g(G_\ff[w])\] =\EE\[ \sum_{w:w\sim \emp}  g(G_\ff[\emp]) \cc_\ff(\emp,w) f(G_\ff[w])\].
\end{align*}
The reversibility of $\P_{sb}$ follows since for $f$ and $g$ in $L_2(\P)$
\begin{align}\label{eq:reversible}
\big\langle f(G_\ff[\emp]), \, K g(G_\ff[\emp])]\big\rangle_{sb}&:=\EE_{sb}[f(G_\ff[\emp]) K g(G_\ff[\emp])]\\
&=\frac{1}{\EE[\rho_\ff(\emp)]}\EE\left[\rho_\ff(\emp) f(G_\ff[\emp]) \sum_{w:w\sim \emp} \frac{\cc_\ff(\emp,w)}{\rho_\ff(\emp)} g(G_\ff[w])\right]\nn\\
&=\frac{1}{\EE[\rho_\ff(\emp)]}\EE\left[ f(G_\ff[\emp]) \sum_{w:w\sim \emp} \cc_\ff(\emp,w) g(G_\ff[w])\right]\nn\\
&=\frac{1}{\EE[\rho_\ff(\emp)]}\EE\left[ \sum_{w:w\sim \emp}  f(G_\ff[w]) \cc_\ff(\emp,w) g(G_\ff[\emp])\right]\nn\\
&=\big\langle K f(G_\ff[\emp]), \, g(G_\ff[\emp])]\big\rangle_{sb}.\nn
\end{align}

 \begin{prop}[Existence of the speed, finite speed regime] If  $\bbE[\tilde \cc]<\infty$, or equivalently \mbox{$\int_0^\ff x \, \Pi(dx)<\ff$,} then
 	$\lim_{m\to\infty} |X_m|/m$  exists, a.s, and is constant with respect to the random environment.
 \end{prop}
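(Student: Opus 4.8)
The plan is to realize the distance $|X_m|$ as a bounded additive functional of the \emph{environment seen from the walker} and then to invoke Birkhoff's ergodic theorem, the constancy of the limit being a consequence of ergodicity. Since $\mathbf{X}$ moves on a tree, each step changes the distance to $\emp$ by exactly $\pm1$. To record the sign as a function of the environment chain I would first enrich $\GG_\star$ to the space of weighted rooted trees carrying a marked \emph{back-edge}, namely the edge from the current root toward $\emp$, and set $\eta_m:=G_\ff[X_m]$ together with this marking. On this enriched space $|X_{m+1}|-|X_m|$ is a measurable function $\Delta(\eta_m,\eta_{m+1})$ of a single transition, equal to $-1$ when the walker crosses the marked back-edge and $+1$ otherwise (the marking updates consistently, since the edge just traversed becomes the new back-edge). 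Hence
$$ |X_m|=\sum_{k=0}^{m-1}\Delta(\eta_k,\eta_{k+1}),$$
so that $|X_m|/m$ is the ergodic average of the bounded sequence $\Delta(\eta_k,\eta_{k+1})$.

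The second step is stationarity. The reversibility computation \eqref{eq:reversible} shows that $K$ is reversible, hence $\P_{sb}$-stationary, for the un-marked environment chain on $\GG_\star$. The point at which the finite-mean hypothesis enters is precisely that $\P_{sb}$ is a genuine probability measure only when $\EE[\rho_\ff(\emp)]<\infty$; by the mean formula for subordinators, $\EE[\rho_\ff(\emp)]=\int_0^\ff x\,\Pi(dx)$, which is finite precisely under the hypothesis $\bbE[\tilde\cc]<\infty$. To lift stationarity to the marked chain $(\eta_m)$ I would re-apply the involution-invariance (mass-transport) principle used to prove \eqref{eq:reversible}: the joint law of (tree, traversed edge) is preserved because involution invariance identifies the forward crossing of an edge with its reverse, and the traversed edge is exactly the new back-edge. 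This produces a stationary law for $(\eta_m)$ built from $\P_{sb}$.

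The third step is ergodicity of the stationary chain $(\eta_m)$, which is what upgrades Birkhoff's almost-sure limit to a deterministic constant. Here I would follow the strategy of \cite{lyons1995ergodic} (see also \cite{LP:book}): since $K$ is reversible, $(\eta_m)$ is a stationary reversible Markov chain, and ergodicity reduces to triviality of its invariant $\sigma$-field. Using transience of $\mathbf{X}$ (established above via the percolation argument and \cite[Prop.~2.1]{gantert2012random}) together with the independence of the Poisson structure across generations of the PWIT, the past and future of the walk decouple and a $0$--$1$ law yields triviality of invariant events. Birkhoff's theorem applied to $\Delta$ then gives, $\P_{sb}$-a.s.,
$$\lim_{m\to\infty}\frac{|X_m|}{m}=\EE_{sb}\big[\Delta(\eta_0,\eta_1)\big]=:s,$$
a constant independent of $\omega$; the transfer from the stationary start to the fixed root $\emp$ (whose environment has law $\P$, not $\P_{sb}$) is handled by mutual absolute continuity of the two laws, with density $\rho_\ff(\emp)/\EE[\rho_\ff(\emp)]$, so that the almost-sure statement carries over and, by Fubini, holds $\mathbf{P}_\omega$-a.s. for $\P$-a.e. $\omega$.

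I expect the ergodicity step to be the main obstacle. The classical arguments of \cite{lyons1995ergodic} are tailored to Galton-Watson trees with finite offspring and to the harmonic/flow measure, whereas here each vertex of the PWIT has infinitely many children and the relevant stationary measure is the reversible $\P_{sb}$; verifying triviality of the invariant $\sigma$-field in this Poisson-weighted, infinite-degree, reversible setting is the delicate point. I emphasize that this argument yields only the existence of $s$ and its constancy; it produces no lower bound, so the positivity of $s$ (which additionally requires $\bbE[(\tilde\cc)^{-1}]<\infty$) is left to the separate Aid\'ekon-type argument.
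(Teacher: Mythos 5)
Your overall architecture (the size-biased reversible measure $\P_{sb}$, stationarity via the mass-transport identity \eqref{eq:reversible}, Birkhoff applied to bounded $\pm1$ increments, and the hypothesis entering through $\EE[\rho_\ff(\emp)]=\int_0^\ff x\,\Pi(dx)<\ff$) matches the paper's. However, the step you yourself flag as the main obstacle --- ergodicity of the stationary environment chain --- is a genuine gap: you assert that ``a $0$--$1$ law yields triviality of invariant events'' without giving an argument, and this is precisely where the finite-offspring arguments of \cite{lyons1995ergodic} do not transfer to the infinite-degree, Poisson-weighted setting. The paper does not prove ergodicity at all. It applies the ergodic theorem to the stationary (not necessarily ergodic) shift on the path bundle ${\rm PathsInTrees}$, obtaining an a.s.\ limit $s$ that is a priori a random variable, and then proves constancy by a separate, concrete argument: it introduces the stopping times $H(n)=\inf\{k:|X_k|\ge n,\ \cc(X_k,\parent{X_k})\le M\}$, notes that $s$ is measurable with respect to $\sigma\big(\bigcup_n\Fcal_{H(n)}\big)$, and shows that on the event that the subtree $\mathcal{G}(X_{H(n)})$ is ``good'' (the walk never returns to $\parent{X_{H(n)}}$ --- an event of probability bounded away from zero by transience) the pre- and post-$H(n)$ trajectories are disjoint and conditionally independent given $(H(n),X_{H(n)})$; hence $s$ is independent of each $\Fcal_{H(k)}$ and must be constant. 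To complete your proof you must either supply the missing $0$--$1$ law or replace the ergodicity step by a constancy argument of this kind.

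There is also a flaw in your additive functional. Updating the marked back-edge by ``the traversed edge becomes the new back-edge'' is inconsistent: when the walker crosses the back-edge (i.e.\ moves to the parent), the correct new back-edge is the parent's own edge toward $\emp$, not the edge just traversed, so your rule loses track of $\emp$ after the first backtrack. More fundamentally, the direction toward $\emp$ is not a function of the re-rooted environment, so the marked chain is not obviously Markov, let alone stationary under $\P_{sb}$. The paper sidesteps this by working with bi-infinite paths glued from two independent walks: by transience the path converges to ends $\mathfrak{a}$ (forward) and $\mathfrak{b}$ (backward), the increment $Y$ is defined relative to these ends, which \emph{are} shift-covariant functions of the stationary system, and $|X_m|$ agrees with the accumulated increments up to a bounded correction. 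Your construction can be repaired along these lines (mark the ray to the limiting end rather than the edge toward $\emp$), but as written it does not go through.
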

\begin{proof}
	The first part of the proof adapts a method of \cite{gantert2012random} based on \cite{lyons1995ergodic} (see also \cite{LP:book}). 
	
	Define bi-infinite random walk paths on the PWIT, $\TT\equiv G_\infty[\emp]$, under the measure $\P_{sb}$, by gluing together two independent walks starting from the root, and for each realization of $\TT$ let the probability measure governing these walks be denoted $RW$. For each $\omega\in RW$ this gives us a sequence of (not necessarily unique) vertices $\ldots, x_{-1}, x_0, x_1, \ldots =:\olra{x}$. 
	The collection of such paths is denoted by $\olra{\TT}$.
	 The paths are coupled to the PWIT (rooted at $x_0$) they are contained in, and the resulting path bundle over the space of trees is
	$${\rm PathsInTrees}:=\{(\olra{x},\TT):\olra{x}\in\olra{\TT}\}$$
	with probability measure $RW\times \P_{sb}$.
	 Define the shift operator $S$ by
	 $$(S\olra{x})_n:= x_{n+1},\quad S(\olra{x},\TT):=(S\olra{x},\TT)$$
	 with $S^k$ being the $k$th iteration of the shift.
	
	By the reversibility described in \eqref{eq:reversible}, 
the Markov chain on ${\rm PathsInTrees}$ whose transitions are induced by the bi-infinite random walk which shifts the path (thus also changing the root of the tree) is stationary. 
	
	Since $\mathbf{X}$ is transient, it is not hard to see that, for almost every $\omega\in RW$, $\olra{x}$ converges to one end of the PWIT, call it $\mathfrak{a}$, as $n\to\infty$ and to a different end, $\mathfrak{b}$, as $n\to -\infty$. Consider the random variable $Y$ which has value 1 if $x_1$ is closer to $\mathfrak{a}$ than $x_0$, value -1 if $x_1$ is closer to $\mathfrak{b}$ than $x_0$, and value 0 otherwise.
	By the Ergodic Theorem
	$$\frac1n \sum_{i=1}^n Y(S^i(\olra{x},\TT))$$
	converges a.s. to some random variable. But note that this limiting random variable is simply the speed $s$.
	
	Next we prove that $s$ is a.s. constant. For any vertex $v$, denote by $\mathcal{G}(v)$ the PWIT which contains  $v$, its descendants, $\parent{v}$, and the edges connecting them. The conductances assigned to each edge are the same as the original PWIT, with the exception of the conductance  assigned to the edge connecting $v$ to its parent, which is set to some constant $M>0$. We say that $\mathcal{G}(v)$ is {\bf good} if the restriction of $\mathbf{X}$ to its graph never returns to  $\parent v$ after its first visit to $v$. 

		Define 
			$$ H(n) := \inf \left\{k \colon  |X_k| \ge n,\mbox{ and }  \cc(X_k, \parent{X_k})\le M \right\}.$$Notice that $s$ is measurable with respect to the $\sigma$-algebra generated by 
		$$ \bigcup_{n} \Fcal_{H(n)}.$$
		On the other hand, by transience, the probability that after time $H(n)$  the process never goes back to the parent of $X_{H(n)}$ is bounded away from zero. If $\mathcal{G}(X_{H(n)})$ is good, then the paths $(X_{t})_{t \ge H(n)}$ and $(X_t)_{t < H(n)}$ are disjoint and they are conditionally  independent, given $(H(n), X_{H(n)})$. Hence $s$ is independent of $\Fcal_{H(k)}$ for any fixed  $k$ and must therefore be constant.
	
%

\end{proof}


\subsection{Proof of Theorem~\ref{finsecm}}


We need several preliminary results to prove positivity of the speed.
Define $\omega$ to be the environment of the tree. Recall that $T_{v}$ is the hitting time of  $v$ by the process  $\mathbf{X}= (X_m)_{m \ge 0}$. Define
$$ \beta_{\omega}(v) := \P_{\omega}( T_{\parent{v}}= \infty\;|\; X_0 = v).$$
For any pair of  distinct vertices $v$ and $u$, denote by $[v, u]$ the self-avoiding  path connecting $v$ to $u$. We say that $v $ is an ancestor of $u$,  if $v \neq u$ and  if $v$ lies in $[\emp, u]$. 

\begin{prop}\label{impro}
Let $\tilde\cc$ be the largest conductance  associated to an edge connecting the vertex $\emp$ to one of its offspring.
Suppose that for some $p\ge 1$, $\bbE[\tilde \cc^p] <\infty$ and $\bbE[(\tilde \cc)^{-p}] <\infty$. For any vertex $x_0 \neq \emp$,  we have
	\begin{equation}\label{rtt0CLT}
\bbE\left[\left(\frac 1{\beta_{{\omega}}(x_0)}\right)^{p} \right]< \infty.
\end{equation}

\end{prop}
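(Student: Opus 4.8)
The plan is to recast $\beta_\omega(x_0)$ electrically and reduce \eqref{rtt0CLT} to a single negative–moment estimate for the effective conductance of a PWIT. Write $\Gamma_v$ for the effective conductance from $v$ to $\infty$ inside the subtree rooted at $v$ (built from $v$ and its descendants only), which is a.s. finite by \eqref{subordinatormeas} and a.s. positive by transience. From $x_0$ the walk escapes (never hits $\parent{x_0}$) precisely when it reaches $\infty$ through the subtree before traversing the parent edge, so the two–terminal escape formula for reversible walks gives
$$\beta_\omega(x_0)=\frac{\Gamma_{x_0}}{\Gamma_{x_0}+\cc_\ff(x_0,\parent{x_0})},\qquad \frac{1}{\beta_\omega(x_0)}=1+\frac{\cc_\ff(x_0,\parent{x_0})}{\Gamma_{x_0}}.$$
Using $(a+b)^p\le 2^{p-1}(a^p+b^p)$, the proposition reduces to bounding $\bbE[(\cc_\ff(x_0,\parent{x_0})/\Gamma_{x_0})^p]$.

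Next I would split by independence. In the PWIT the conductance $\cc_\ff(x_0,\parent{x_0})$ is a point of the Poisson process at $\parent{x_0}$ and is independent of the subtree hanging below $x_0$, hence of $\Gamma_{x_0}$; moreover that subtree is itself a PWIT, so $\Gamma_{x_0}$ has the law of the root conductance $\Gamma:=\Gamma_\emp$. Therefore $\bbE[(\cc_\ff(x_0,\parent{x_0})/\Gamma_{x_0})^p]=\bbE[\cc_\ff(x_0,\parent{x_0})^p]\,\bbE[\Gamma^{-p}]$, and the first factor is at most $\bbE[\tilde\cc^p]<\infty$ since $\cc_\ff(x_0,\parent{x_0})$ is dominated by the largest conductance at $\parent{x_0}$, which is distributed as $\tilde\cc$. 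Everything thus comes down to showing $\bbE[\Gamma^{-p}]<\infty$, where $\Gamma$ obeys the recursive distributional identity
$$\Gamma\ \stackrel{d}{=}\ \sum_i\frac{\cc_i\,\Gamma_i}{\cc_i+\Gamma_i},$$
with $\{\cc_i\}$ a Poisson process of intensity $\Pi$ (the conductances at the root) and $\{\Gamma_i\}$ i.i.d. copies of $\Gamma$ independent of $\{\cc_i\}$; this is the parallel–series reduction over children combined with $\beta_\omega(w_i)=\Gamma_i/(\Gamma_i+\cc_i)$.

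The crux, and the step I expect to be hardest, is a sharp lower–tail estimate for $\phi(t):=\bbP(\Gamma<t)$ as $t\to0$. Keeping a single child gives only $\Gamma^{-1}\le\cc_{(1)}^{-1}+\Gamma_{(1)}^{-1}$, hence the self–referential and exactly critical inequality $\|\Gamma^{-1}\|_p\le\|\tilde\cc^{-1}\|_p+\|\Gamma^{-1}\|_p$, which yields nothing; the gain must come from the infinitely many independent children, making it unlikely that \emph{every} escape route is poor. Writing $\psi(t):=\Pi([t,\infty))$, so that $\bbP(\tilde\cc<t)=e^{-\psi(t)}$, I would first keep only children with $\cc_i\ge t$ and $\Gamma_i\ge t$: each contributes at least $t/2$, their number is $\mathrm{Poisson}(\psi(t)(1-\phi(t)))$ by thinning, and $\{\Gamma<t\}$ forces it to be $\le 1$. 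This yields the crude bound $\phi(t)\le(1+\psi(t))e^{-\psi(t)/2}$ for small $t$, which after the substitution $u=\psi(t)$ already gives $K_0:=\int_0^\infty\phi(x)\,\Pi(dx)<\infty$.

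Finally I would upgrade the constant from $1/2$ to (essentially) $1$. From $\Gamma\ge\frac12\sum_i\cc_i\,\mathbf 1_{\{\Gamma_i\ge\cc_i\}}=:\frac12 S$, the variable $S$ is infinitely divisible with Lévy measure $(1-\phi(x))\,\Pi(dx)$ (thinning again), so that
$$\bbP(\Gamma<t)\le\bbP(S<2t)\le\exp\!\Big(-\!\int_{2t}^\infty(1-\phi(x))\,\Pi(dx)\Big)\le e^{K_0}\,e^{-\psi(2t)}=e^{K_0}\,\bbP(\tilde\cc<2t).$$
Inserting this into $\bbE[\Gamma^{-p}]=p\int_0^\infty t^{-p-1}\phi(t)\,dt$ and comparing with $\bbE[\tilde\cc^{-p}]=p\int_0^\infty t^{-p-1}\bbP(\tilde\cc<t)\,dt$ then gives $\bbE[\Gamma^{-p}]<\infty$ and completes the proof. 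The delicate point throughout is exactly this matching of constants: the tail exponent must be $\psi$ with coefficient arbitrarily close to $1$ rather than $1/2$, which is why the finiteness of $K_0$ — i.e. that the thinning correction $\int\phi\,d\Pi$ costs only an $O(1)$ factor — is what makes the negative–moment bound close.
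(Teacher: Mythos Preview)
Your argument is correct and takes a genuinely different route from the paper's. The paper follows Aid\'ekon's strategy: it rewrites the recursion as
\[
\frac{1}{\beta_\omega(x)}=1+\frac{1}{\sum_i A(y_i)\beta_\omega(y_i)}\le 1+\min_i\frac{1}{A(y_i)\beta_\omega(y_i)},
\]
then iterates this bound along the deterministic (given $\omega$) path $x_0,x_1,\dots$ that always follows the child of maximal conductance. The iteration is controlled by events $E_n$ on which all side--branches fail to give a cheap escape, and the telescoping product $B(n)=\1_{E_n}\cc(x_0^{-2},x_0^{-1})/\cc(x_{n-1},x_n)$ has $p$--th moment decaying geometrically because $\bbP(E_n)\le\bbP(E_1)^{n/2-1}$ by one--dependence; a Jensen step then sums the series.

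Your approach bypasses the path construction entirely by passing to the effective conductance $\Gamma$, factoring $1/\beta_\omega(x_0)=1+\cc_\infty(x_0,\parent{x_0})/\Gamma_{x_0}$, and reducing to the single lower--tail estimate $\phi(t)\le e^{K_0}\bbP(\tilde\cc<2t)$. The two--step bootstrap (crude bound $\phi(t)\lesssim(1+\psi(t))e^{-\psi(t)/2}$ to get $K_0<\infty$, then thinning to Lévy measure $(1-\phi)\Pi$ to recover the full exponent $\psi$) is clean and exploits the Poisson structure of the PWIT in a way the path argument does not. What you gain is a shorter and more transparent proof that also yields an explicit comparison $\bbE[\Gamma^{-p}]\le 2^pe^{K_0}\,\bbE[\tilde\cc^{-p}]$. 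What the paper's approach buys is portability: the Aid\'ekon iteration works in more general branching--random--walk environments where no Poisson thinning is available, whereas your bootstrap leans essentially on the fact that independent marking of a Poisson process yields another Poisson process. One small point worth making explicit in your write--up is that the crude bound uses $\phi(t)\to 0$ as $t\to 0$, which you have from $\Gamma>0$ a.s.\ by transience (established earlier in Section~\ref{sec:speed}).
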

\begin{proof}

This proof follows the same strategy as the proof of  Lemma 2.2 in \cite{aidekon2008transient}, with modifications due to the nature of the environment that we consider.  
For each vertex $y$,  with $|y| \ge 2$, define

$$A(y)\equiv A_\omega(y) := \frac{\cc(y^{-1}, y)}{\cc(y^{-2}, \parent{y})},$$
where $y^{-2}$ is the parent of $\parent{y}$. For a generic vertex $x$, {with $|x|\ge 1$},
denote by $y_{i}$, $ i \in \N$,  the offspring of $x$. We next prove the relation
\begin{equation}\label{adk1}
\beta_{{\omega}}(x) = \sum_{i=1}^\infty \frac{A(y_i)}{1+ \sum_{j=1}^\infty A(y_j)} \beta_{{\omega}}(y_i) + \sum_{i=1}^\infty \frac{A(y_i)}{1+ \sum_{j=1}^\infty A(y_j)}\Big(1-\beta_{{\omega}}(y_i)\Big) \beta_{{\omega}}(x).
\end{equation}
In order to prove \eqref{adk1}, notice that
$$\frac{A(y_i)}{1+ \sum_{j=1}^\infty A(y_j)}$$
is the probability to jump to $y_i$ from $x$.  Given that the process jumps to $y_i$, it either goes back to $x$ in the future or does not. In the former case we restart our reasoning using the Markov property of the quenched process.
Rearranging \eqref{adk1}, we get
\begin{equation}\label{aidk2}
  \frac{1}{\beta_{{\omega}}(x)}\, =\,  1 + \frac{1}{\sum_{i=1}^\infty A(y_i) \beta_{{\omega}}(y_i)} \le 1 + \min_{i} \frac 1{A(y_i) \beta_{{\omega}}(y_i)}.
  \end{equation}
  
  Define a particular path, depending on $\omega$, in the following way.
  The first vertex of the path is  $x_0$  that was fixed in the statement of the proposition.  We set $x_1, x_2, \ldots$ recursively by setting  $x_{n+1}$ to be the child of $x_n$ which has maximal conductance, i.e. which maximizes $u \mapsto \cc(x_n, u)$ where $u\neq x_n^{-1}= x_{n-1}$.
Define $\Hcal_k\equiv \Hcal_k(\omega) $ to be the set of offspring of $x_k$ which are different from $x_{k+1}$, for $k\ge 0$.  Fix a constant $C$ large enough, to be specified later. For all $n\in \N$, 
define 
\begin{equation}\label{eq:en}
E_n := \left\{\omega: \forall k \in \{2, 3, \ldots, n-2\}, \forall z \in \Hcal_k(\omega) \mbox{ we have } \Big(A_\omega(z) \beta_{\omega}(z)\Big)^{-1} >C\right\}.
\end{equation}
We set $E_0^c = \emptyset$. Notice that  $E_{n+1} \subset E_n$ and that on the event $ E^c_{n+1}\cap E_n$ we have
$$ \min_{y \in  \Hcal_n}  \frac {1 }{A(y)\beta_{{\omega}}(y)} \le C.$$
Combining these two facts with  \eqref{aidk2},
  we infer the following.
\begin{equation}\label{ven}
\begin{aligned}
\frac{\1_{E_n}}{\beta_{{\omega}}(x_n)} &
\le  1 + \min_{y \in  \Hcal_n} \frac {\1_{E_{n+1}^c} \1_{E_n} }{A(y)\beta_{{\omega}}(y)}  +\min_{y \in  \Hcal_n} \frac {\1_{E_{n+1}}}{A(y)\beta_{{\omega}}(y)}\\
&\le  1 + C  + \min_{y \in  \Hcal_n} \frac {\1_{E_{n+1}}}{A(y) \beta_{{\omega}}(y)}\le 1+ C + \frac{\1_{E_{n+1}}}{A(x_{n+1}) \beta_{{\omega}}(x_{n+1})}.
\end{aligned}
\end{equation}
Consider two distinct vertices, $z, v$, and let $(z_i)_{i\in \N}$ and $(v_i)_{i \in \N}$, respectively, be the offspring of $z$ and $v$. Recall that $(\cc(z, z_i))_{i \in \N}$ and $(\cc(v, v_i))_{i \in \N}$ are independent. This fact implies  that $\cc(x_n, x_{n+1})$ are i.i.d. (recall that $x_n$ and $x_{n+1}$ are random).  In turn, this implies that the  process $(A(x_n), n\ge 1)$ is one-dependent, in other words $A(x_n)$ and $A(x_j)$ are independent if the vertices $x_n$ and  $ x_j$ are not neighbors.
 Set $$B(n) := \1_{E_n} \prod_{k=1}^n \frac 1{A(x_k)}= \1_{E_n} \frac{ \cc(x_0^{-2}, \parent{x_0})}{\cc(x_n, x_{n-1})}.$$

 By iterating the equation \eqref{ven} we have 

\begin{align}\label{finstforCLT}
\left(\frac 1{\beta_{{\omega}}(x_0)}\right)^{p } &\le {(1+C)^{p} \left(\sum_{n \ge 0} B(n) \right)^{p}}\\
&{= (1+C)^{p} \left(\sum_{n \ge 0} 2^{n+1} B(n) 2^{-n-1} \right)^{p}\nn} \\
&\le (1+C)^{p}{2^{p}} \sum_{n \ge 0} \left(2^{n}B(n)\right)^{p} 2^{-n-1}\nn \\
&= (1+C)^{p}{2^{p-1}} \sum_{n \ge 0}  2^{(p-1) n}\left(B(n)\right)^{p},\nn
\end{align}
where in the second last step we used Jensen's inequality with respect to the probability measure that assigns probability $2^{-n -1}$ to $n \in 0 \cup \N$.
Notice that
$$ \bbE\left[\(B(n)\)^p\right] = \bbP(E_n) \bbE\left[\cc^p(x_0^{-2}, \parent{x_0})\right]  \bbE\left[\frac {1} {\cc^p(x_n, x_{n-1})}\right]<\infty,$$
since  $\bbE[\tilde \cc^p] <\infty$ and $\bbE[(\tilde \cc)^{-p}] <\infty$, and since for $n\ge1$ we have $E_n$, $\cc(x_0^{-2}, \parent{x_0})$, and $\cc(x_n, x_{n-1})$ are independent. Notice also that 
$$ \bbP(E_n) \le \bbP(E_1)^{n/2 -1},$$
{as the  random variables 
$$ \left(\min_{y \in  \Hcal_n}  \frac {1 }{A(y)\beta_{{\omega}}(y)} \right)_{n \in \N} $$
are one-dependent.}

{Hence, we can choose  $C$  such  that  $\E[B(n)^p] < 2^{-pn}$,  which implies that 
$$  \bbE\left[\left(\frac 1{\beta_{{\omega}}(x_0)}\right)^{p} \right] \le C_{p} \sum_{n \ge 0} \E\left[2^{n(p-1)}B(n)^{p}\right] < \infty.
$$}



\end{proof}

Denote the number of visits to $\emp$ as
$$ L(\emp) := \sum_{j=0}^\infty \1_{X_j = \emp}.$$
\begin{lem}\label{Lfinmom}
If for some $p\ge 1$, $\bbE[\tilde \cc^p] <\infty$ and $\bbE[(\tilde \cc)^{-p}] <\infty$, then
			$$ \E\left[\left(\widetilde{L}(\emp)\right)^{p}\right]<\ff.$$
		\end{lem}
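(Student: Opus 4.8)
\emph{Plan.} The strategy is to use that, once the environment $\omega$ is fixed, the number of visits to the root is a geometric random variable, and then to control the quenched escape probability from $\emp$ through the estimate on $1/\beta_{\omega}$ already established in Proposition~\ref{impro}. First I would record the quenched law of $\widetilde L(\emp)$. Writing $y_1,y_2,\ldots$ for the offspring of $\emp$, the walk started at $\emp$ leaves to $y_i$ with probability $\cc(\emp,y_i)/\rho_\ff(\emp)$ and then never returns to $\emp$ with probability $\beta_{\omega}(y_i)$. By the strong Markov property each departure escapes $\emp$ forever with the same probability
$$q_\omega := \sum_{i} \frac{\cc(\emp,y_i)}{\rho_\ff(\emp)}\,\beta_{\omega}(y_i),$$
so that $\widetilde L(\emp)$ is geometric with parameter $q_\omega$ and quenched mean $1/q_\omega\ge 1$. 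Since the $p$-th moment of a geometric variable is bounded by a constant multiple of the $p$-th power of its mean, $\E_{\omega}[\widetilde L(\emp)^p]\le C_p\, q_\omega^{-p}$ with $C_p$ depending only on $p$, and hence $\E[\widetilde L(\emp)^p]\le C_p\,\E[q_\omega^{-p}]$.

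Next I would bound $q_\omega^{-1}$ from below by keeping only the term of the child $y^{\star}$ of maximal conductance, $\cc(\emp,y^{\star})=\tilde\cc$, giving
$$\frac{1}{q_\omega}\ \le\ \frac{\rho_\ff(\emp)}{\cc(\emp,y^{\star})\,\beta_{\omega}(y^{\star})}\ =\ \frac{\rho_\ff(\emp)}{\tilde\cc\,\beta_{\omega}(y^{\star})}.$$
It then remains to show $\E\big[\big(\rho_\ff(\emp)/(\tilde\cc\,\beta_{\omega}(y^{\star}))\big)^p\big]<\infty$. Here I would rerun the maximal-conductance path argument of Proposition~\ref{impro}, but rooted at $\emp$: unfolding \eqref{aidk2} along $x_0=\emp,\ x_1=y^{\star},\ x_2,\ldots$, with $x_{k+1}$ the maximal-conductance child of $x_k$, produces exactly the terms $B(n)=\1_{E_n}\,\rho_\ff(\emp)/\cc(x_{n-1},x_n)$. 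This is the $B(n)$ of \eqref{finstforCLT} with the factor $\cc(x_0^{-2},\parent{x_0})$ replaced by $\rho_\ff(\emp)$. For $n\ge 2$ the variable $\rho_\ff(\emp)$ depends only on the edges at $\emp$, hence is independent of both $E_n$ and $\cc(x_{n-1},x_n)$, so $\E[B(n)^p]=\bbP(E_n)\,\E[\rho_\ff(\emp)^p]\,\E[\cc(x_{n-1},x_n)^{-p}]$; the same computation as in \eqref{finstforCLT} then applies once $\E[\rho_\ff(\emp)^p]<\infty$ and $\E[\tilde\cc^{-p}]<\infty$. The first of these holds because $\E[\tilde\cc^p]<\infty$ is equivalent (as in Remark~2, with $x^p$ in place of $x$) to $\int_{1}^{\infty}x^p\,\Pi(dx)<\infty$, which together with $\int_0^1 x^p\,\Pi(dx)\le\int_0^1 x\,\Pi(dx)<\infty$ from \eqref{subordinatormeas} gives $\int_0^\infty x^p\,\Pi(dx)<\infty$, precisely the condition for the $\ID(\Pi)$ marginal $\rho_\ff(\emp)$ to have a finite $p$-th moment; the second is assumed. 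Choosing $C$ in \eqref{eq:en} large enough so that $\E[B(n)^p]\le 2^{-pn}$ makes the resulting series summable.

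The subtle point, which I expect to be the main obstacle, is the base term $n=1$, namely $\E[\1_{E_1}(\rho_\ff(\emp)/\tilde\cc)^p]$: here $\tilde\cc=\cc(\emp,y^{\star})$ is itself the largest summand of $\rho_\ff(\emp)$, so $\rho_\ff(\emp)$ and $\tilde\cc$ are not independent and a naive H\"older split would cost $2p$ moments. I would handle this by conditioning on $\tilde\cc$: writing $\rho_\ff(\emp)=\tilde\cc+R$ with $R=\rho_\ff(\emp)-\tilde\cc\ge 0$ the sum of the sub-maximal conductances, one has $(\rho_\ff(\emp)/\tilde\cc)^p\le 2^{p-1}\big(1+(R/\tilde\cc)^p\big)$, and given $\tilde\cc=t$ the variable $R$ is a sum over a Poisson process on $(0,t)$ with intensity $\Pi$, so $\E[R^p\mid\tilde\cc=t]$ is bounded uniformly in $t$ by $\E[\rho_\ff(\emp)^p]<\infty$. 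Therefore $\E[(R/\tilde\cc)^p]=\E\big[\tilde\cc^{-p}\,\E[R^p\mid\tilde\cc]\big]\le \E[\rho_\ff(\emp)^p]\,\E[\tilde\cc^{-p}]<\infty$, and the base term is finite under the same two moment hypotheses. Combining this with the one-dependence of $(A(x_k))_{k\ge1}$ along the root path (so that $\bbP(E_n)\le\bbP(E_1)^{n/2-1}$ decays geometrically) yields $\E[q_\omega^{-p}]<\infty$, and the geometric reduction of the first step then gives $\E[\widetilde L(\emp)^p]<\infty$.
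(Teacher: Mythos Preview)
Your proof is correct and takes a genuinely different route from the paper's.

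The paper does \emph{not} treat $L(\emp)$ as a single quenched geometric. Instead it decomposes over the children of $\emp$: each visited child $z$ contributes a quenched-geometric number $\phi(z)$ of returns to $\emp$ with mean $1/\beta_\omega(z)$, so $L(\emp)\le\sum_z\phi(z)$. The children are then split by an $\epsilon$-threshold into a Poisson number $Z^{(\ge)}$ with conductance $>\epsilon$ and a number $Z^{(\le)}$ of visited low-conductance children; a separate argument shows $Z^{(\le)}$ has geometric tails (hence all moments), and Proposition~\ref{impro} is invoked \emph{as a black box} to bound $\E[\beta_\omega(z)^{-p}]$ conditionally on the two events $\{\cc(\emp,z)>\epsilon\}$ and $\{\cc(\emp,z)\le\epsilon\}$.

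Your approach instead collapses everything to the single quenched escape probability $q_\omega$ and then \emph{reopens} the machinery of Proposition~\ref{impro}, rerunning the maximal-conductance iteration from $\emp$ with the missing parent-edge conductance replaced by $\rho_\ff(\emp)$. The trade-offs: you avoid the $\epsilon$-split and the $Z^{(\le)}$ moment argument entirely, but in exchange you need (i) the classical fact that $\int_0^\infty x^p\,\Pi(dx)<\infty$ implies $\E[\rho_\ff(\emp)^p]<\infty$ for $\rho_\ff(\emp)\sim\ID(\Pi)$, and (ii) the conditioning step to handle the dependent ratio $\rho_\ff(\emp)/\tilde\cc$ in the base term. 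Both are legitimate; your route is arguably cleaner in that a single geometric is easier to raise to the $p$-th power than a random sum of geometrics, while the paper's route is more modular in that Proposition~\ref{impro} is used off the shelf rather than re-derived for the root.
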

\begin{proof}
	Denote  by $z_1, z_2, \ldots$ the offspring of $\emp$. Denote by $Z^{\ssup{\ge}}$ the number of
$z_i$ satisfying $\cc( \emp, z_i)>\epsilon$, for some $\epsilon>0$ small enough.  Notice that $Z^{\ssup{\ge}}$ has all moments finite since it has a Poisson distribution with parameter $\Pi(\eps,\ff)$.

  Denote by $Z^{\ssup{\le}}$ the  number of vertices $z_i$ satisfying  $\cc( \emp, z_i) \le \epsilon$ and which are visited by the process $\mathbf{X}$.
	 We next argue that $Z^{\ssup{\le}}$ has all moments finite as follows. Each time a process visits a previously unvisited offspring of $\emp$ and the conductance assigned to the edge connecting this vertex to $\emp$ is less than $\epsilon$, then the annealed conditional probability of not returning to $\emp$, conditioned on the past, is bounded below by a fixed positive constant.
Now, fix one of the offspring of $\emp$, say $z$, and construct a tree $\L$ by taking $\emp$, $z$, their common edge, and the tree consisting of all the descendants of $z$. As before, consider the restriction of the process $\mathbf{X}$ to $\L$ and denote this restriction by $\mathbf{X}^{(\L)}$. We emphasize the fact that $\mathbf{X}$ and $\mathbf{X}^{(\L)}$ can be coupled in such a way that their steps  coincide up to the random time when  $\mathbf{X}$ leaves $\L$ forever. Under the quenched measure, the number of visits of $\mathbf{X}^{(\L)}$ to $\emp$, say $\phi(z)$, is geometrically distributed with mean  $ 1/\beta_{\omega}(z)$. 

The number of visits of $\mathbf{X}$ to $\emp$ is bounded by the sum of $\phi(z)$ over the $z$ which are the distinct offspring of $\emp$ that are visited by  $\mathbf{X}$.
	It is immediate, using exchangeability, to see  that 

	\begin{equation}\label{import}
	\bbE[L(\emp)^{p}] \le \bbE\left[\left(Z^{\ssup{\ge}}\right)^{p}\right] \bbE\left[\frac1{\beta^{p}_{\omega}(z_1)} \;\Big{|}\;\cc(z_1, \emp) > \epsilon\right]  + \bbE\left[\left(Z^{\ssup{\le}}\right)^{p}\right] \bbE\left[\frac1{\beta^{p}_{\omega}(z_1)} \;\Big{|}\;\cc(z_1, \emp) \le \epsilon\right] < \infty,
	\end{equation}
	where the finiteness of $\bbE[ 1/\beta^{p}_{\omega}(z_1) \;| \;\cc(z_1, \emp) > \epsilon]$, $\bbE[ 1/\beta^{p}_{\omega}(z_1) \;| \;\cc(z_1, \emp) < \epsilon]$ follows from Proposition~\ref{impro}.
\end{proof}

 Define
 $$ G_n := \#\{ v \colon |v| \le n,\; \exists\; j \in \N\cup \{0\} \mbox{ such that } X_j = v\},$$
 i.e., the number of vertices at distance less or equal to $n$ from the root, that are visited by the process.
\begin{lem}\label{lemcn} For some positive constant $K$, we have
$$ \bbE[G_n] \le K n.$$
\end{lem}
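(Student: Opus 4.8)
The plan is to reduce the linear bound to a uniform estimate on the expected number of distinct visited vertices in a single generation. Writing $V_k$ for the number of vertices $v$ with $|v|=k$ that are visited by $\mathbf X$, we have $G_n = \sum_{k=0}^n V_k$, so it suffices to produce a constant $K$ with $\bbE[V_k]\le K$ for every $k$. Recall from the existence-of-speed argument that, by transience, the trajectory converges to a single end of the PWIT; in particular there is exactly one vertex at each level (the backbone vertex) through whose subtree the walk eventually escapes, while every other visited level-$k$ vertex is entered and later left.

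First I would set up a geometric-type domination of $V_k$ driven by escape probabilities. List the distinct level-$k$ vertices in order of first visit as $v^{(1)},v^{(2)},\dots$, and let $\Fcal_{(i)}$ denote the history up to the first visit to $v^{(i)}$. At that instant the subtree rooted at $v^{(i)}$ is freshly revealed, hence, by the PWIT construction (the offspring subtrees are independent), an independent copy whose only vertex adjacent to level $k-1$ is $v^{(i)}$ itself, through the edge to $\parent{(v^{(i)})}$. Consequently, on the event that the walk never returns to $\parent{(v^{(i)})}$ --- an event of quenched probability $\beta_\omega(v^{(i)})$ by the strong Markov property --- it remains at levels $\ge k$ forever and makes no further fresh descent into level $k$. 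Thus $\{V_k>i\}\subseteq\{V_k\ge i\}\cap\{\text{no escape at }v^{(i)}\}$, which gives
\[
\bbP(V_k>i)\;\le\;\bbE\big[\1\{V_k\ge i\}\,(1-\beta_\omega(v^{(i)}))\big].
\]

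The engine of the estimate is then the control of the escape probabilities $\beta_\omega(v^{(i)})$ already obtained in Proposition~\ref{impro}. Since each $v^{(i)}$ is freshly discovered with an independent PWIT subtree, $\beta_\omega(v^{(i)})$ is, conditionally on its descent edge, an escape probability of the type covered by Proposition~\ref{impro}; together with $\bbE[\tilde\cc]<\infty$ this yields $\bbE[1/\beta_\omega(v^{(i)})]<\infty$, and Cauchy--Schwarz then gives $\bbE[\beta_\omega(v^{(i)})]\ge 1/\bbE[1/\beta_\omega(v^{(i)})]=:\delta>0$. If this lower bound can be installed conditionally on $\Fcal_{(i)}$ (on the event $\{V_k\ge i\}$), the displayed inequality iterates to $\bbP(V_k\ge i)\le(1-\delta)^{\,i-1}$, whence $\bbE[V_k]\le 1/\delta=:K$ uniformly in $k$; summing over $k\le n$ produces $\bbE[G_n]\le (n+1)K$.

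The main obstacle is precisely this last point: conditioning on $\{V_k\ge i\}$ size-biases the environment at $v^{(i)}$, favouring large descent conductances $\cc(\parent{(v^{(i)})},v^{(i)})\le\tilde\cc$ and thus smaller values of $\beta_\omega(v^{(i)})$, so the unconditional bound $\bbE[\beta]\ge\delta$ must be upgraded to a conditional one. This is where both standing hypotheses enter: the integrability of $\tilde\cc$ bounds the strength of the size-biasing of the descent edge, while the finiteness of the inverse moments of $\tilde\cc$ (through Proposition~\ref{impro}) keeps $\bbE[\beta_\omega(v^{(i)})\mid\Fcal_{(i)}]$ bounded below by a positive constant on the relevant event. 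The independence of the freshly revealed subtrees is what legitimises treating the successive escape attempts as conditionally independent Bernoulli trials and comparing $V_k$ with a genuine geometric variable; quantifying the size-bias correction carefully is the only delicate step.
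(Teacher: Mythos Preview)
Your high-level strategy --- decomposing $G_n$ level by level and dominating each $V_k$ by a geometric variable via escape probabilities --- is close in spirit to the paper's argument, but the obstacle you flag at the end is a genuine gap, not a routine detail. The descent edge $\cc(\parent{(v^{(i)})},v^{(i)})$ is \emph{not} freshly revealed at the first visit to $v^{(i)}$: it has already been used by the walk to reach $v^{(i)}$, and the very fact that the walk crossed it biases it toward large values. Since $\beta_\omega(v^{(i)})$ can be made arbitrarily small by taking this conductance large, no unconditional moment bound on $1/\beta_\omega$ (such as Proposition~\ref{impro}) can be converted, without further input, into the uniform conditional lower bound $\bbE[\beta_\omega(v^{(i)})\mid\Fcal_{(i)}]\ge\delta>0$ that your iteration requires. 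Integrability of $\tilde\cc$ controls the annealed size of conductances, not the conditional law given that the walk chose to cross; on that event the edge is effectively size-biased by its conductance, and there is simply no uniform lower bound on the escape probability available.

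The paper closes exactly this gap with a thresholding device your sketch is missing. Fix $M$ large and split the visited vertices at each level into those with descent conductance $<M$ and those with descent conductance $\ge M$. For the first class, monotonicity of the escape event in $\cc(v,\parent v)$ yields a \emph{uniform} lower bound on the (annealed, conditional) escape probability --- compare to the worst case $\cc(v,\parent v)=M$, where the rest of the subtree is still fresh --- and then your geometric argument goes through cleanly, bounding the per-level count $g_i$ of such vertices by a geometric variable with parameter independent of $i$. For the second class, the vertices with descent conductance $\ge M$ hanging below any vertex of the first class form a subcritical Galton--Watson tree (since $\Pi(M,\infty)$ is small for $M$ large), whose expected size is finite and independent of the past. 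Combining the two pieces gives $\bbE[G_n]\le \bbE[Z_1]\sum_{i\le n}\bbE[g_i]=O(n)$. Note also that this argument uses only transience and the structure of the L\'evy measure; it does not invoke Proposition~\ref{impro} or either moment hypothesis on $\tilde\cc$, so your appeal to those is unnecessary here.
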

\begin{proof}
We fix $M>0$ large enough, to be specified later. Denote by
$$\mathcal{G}_n :=   \Big\{v \colon |v| \le n,\; \exists\; j \mbox{ such that } X_j = v,\; \cc(\parent{v}, v) < M\Big\}$$
Denote by $\widetilde{G}_n$ the cardinality of $\mathcal{G}_n$. Our goal (\eqref{inimpo} below) is to   provide a bound on  $G_n$ in terms of $\widetilde{G}_n$.  Order the distinct elements of $\mathcal{G}_n$, using, for example, the chronological order as they are visited by $\mathbf{X}$.  For each element of $\mathcal{G}_n$ we build a Galton-Watson tree,  as follows.
Fix the $i$-th element of $\mathcal{G}_n$. Call this  random vertex $v$.  Label the vertex $v$  with $i$   and label in the same way some of its descendants  according to the following rule. A vertex $u$, descendant of $v$, is labelled $i$  if 1) $\parent{u}$ is as well labelled $i$ and 2) $\cc(u, \parent{u}) \ge M$. Notice that,  if we choose $M$ large enough, the set of  vertices labelled $ i $  forms a subcritical Galton-Watson tree $\mathcal{T}_i$.  Denote by $Z_i$ the cardinality of this Galton-Watson tree.  Repeat this  procedure for each vertex in $\mathcal{G}_n$ and for the root of the tree.
We prove  the inequality
\begin{equation}\label{inimpo}
 G_n \le  \sum_{i=0}^{\widetilde{G}_n} Z_i.
 \end{equation}
To see that \eqref{inimpo} is true, it is enough to notice  that each vertex $v$ visited by the process, with $|v|\le n$ either satisfies
\begin{itemize}
\item $\cc(v, \parent{v}) <M$ and belongs to $\mathcal{G}_n$ (hence is counted in the right hand side of \eqref{inimpo} as a root of one of the $\mathcal{T}_i$, with  $i \le \widetilde{G}_n$); or
\item $\cc(v, \parent{v})  \ge M$ and $v$ is   a  vertex
 of one of the  $\mathcal{T}_i$, with  $i \le \widetilde{G}_n$.
\end{itemize}

Notice that $\bbE[Z_i] <\infty$  as this is the average size of a subcritical Galton-Watson tree.
Moreover, the $(Z_i)_i$ are identically distributed.
Hence, using \eqref{inimpo}, we have
\begin{equation}\label{infg}
  \bbE[G_n] \le \sum_{i=0}^{\infty} \bbE[Z_i \1_{\widetilde{G}_n \ge i}] =  \bbE[Z_1] \bbE[\widetilde{G}_n],
\end{equation}
{where in the last step we used independence between $\{\widetilde{G}_n \ge i\}$ and $Z_i$, which is a direct consequence of the definition of $Z_i$.}
Next, we prove that $\bbE[\widetilde{G}_n] = O(n)$ and this will end the proof of this lemma. To this end, denote by $g_i$ the number of vertices $v$  at level $i$   visited by the process and satisfying $\cc(v, \parent{v})<M$. We will see that $g_i$ is bounded by an exponential, with a parameter not depending on $i$.  Suppose that $x$ is the $j$-th vertex at level $i$ visited by the process and satisfying $\cc(x, \parent{x})<M$. 
We finish by proving
\begin{equation}\label{thepdn}
  \bbP(g_i > j+1\;\big|\; g_i > j) \le  \alpha,
  \end{equation}
for some $\alpha \in (0,1)$, independent of $i$ and $j$.  In order to prove \eqref{thepdn}, we reason as follows. {Given $g_i >  j$,  consider what happens at the first time  $\mathbf{X}$ visits the $(j+1)$-th distinct vertex, say $x$, at level $i$  with $\cc(x, \parent{x}) \le M$.}   If  it never goes back to $\parent{x}$ then $g_i = j+1$. Hence 
{
\begin{align*} \bbP(g_{i} = j+1\;\big|\; g_i > j)  &\ge \bbP( T_{\parent{x}}= \infty\;\big|\;  X_0 =x, \cc(x, \parent{x}) \le M) \\ &\ge \bbP( T_{\parent{x}}= \infty\;\big|\;  X_0 =x, \cc(x, \parent{x})=M) \ge 1 - \alpha,
\end{align*}
}
for some $\alpha>0$, where we use that $T_{\parent{x}}$ is monotone in the value of $\cc(x, \parent{x})$.
  Finally,
$$ \bbE[\widetilde{G}_n] = \sum_{i=1}^n \bbE[g_i] =  O(n).$$
\end{proof}

Set, for $v \neq \emp$,
\begin{equation}\label{keydef}
 \ L(v) := \sum_{j=0}^\infty \1_{X_j = v}\1_{X_{j+1} \neq \parent{v}},
\end{equation}
i.e., the number of times the process $\mathbf{X}$ jumps from $v$ to one of its offspring.
Denote by $\mathcal{T}(v)$ the subtree consisting of $v$, all its descendants, and the edges connecting them.
\begin{lem}\label{lem:tL}
There exists a random variable $\widetilde{L}(v)$ which is $\sigma(\{\cc(x, \parent{x}) \colon x \in \mathcal{T}(v), x \neq v\})$ measurable, such that
 \begin{itemize}
 \item $ L(v) \le \widetilde{L}(v)$, and
 \item $\widetilde{L}(v) $ has the same distribution as $L(\emp)$.
 \end{itemize}
\end{lem}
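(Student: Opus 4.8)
The plan is to read $L(v)$ through the \emph{down-excursions} of $\mathbf X$ from $v$, i.e.\ the excursions into $\mathcal T(v)\setminus\{v\}$ that begin each time $\mathbf X$ jumps from $v$ to one of its offspring. By the strong Markov property and the fact that $\mathbf X$ is a nearest-neighbour (quenched Markov) walk on the tree, conditionally on the environment each such excursion is an independent copy of the same experiment: starting from $v$, the walk picks an offspring $z_i$ with probability $\cc(v,z_i)/\sum_j\cc(v,z_j)$ (this conditional law, \emph{given} that a down-jump occurs, does not involve $\cc(v,\parent v)$), wanders inside $\mathcal T(v)$, and then either returns to $v$ or escapes to infinity within $\mathcal T(v)$. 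Let $q_\omega(v)$ denote the quenched probability that such a down-excursion returns to $v$. Since the dynamics inside $\mathcal T(v)$ only see the conductances $\{\cc(x,\parent x):x\in\mathcal T(v),\,x\neq v\}$, the number $q_\omega(v)$ is a function of exactly these conductances and is in particular independent of $\cc(v,\parent v)$.

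Next I would build $\widetilde L(v)$ on a possibly enlarged probability space. Let $\xi_1,\xi_2,\dots$ be i.i.d.\ $\mathrm{Bernoulli}(q_\omega(v))$ variables, read as the return/escape outcomes of the successive (potential) down-excursions from $v$, and realize $\mathbf X$ so that its $k$-th down-jump returns to $v$ precisely when $\xi_k=1$. Set
\begin{equation*}
\widetilde L(v):=\inf\{k\ge 1:\xi_k=0\}.
\end{equation*}
By construction $\widetilde L(v)$ is built only from $q_\omega(v)$ and the excursion randomness inside $\mathcal T(v)$, and never refers to $\cc(v,\parent v)$ or to conductances outside $\mathcal T(v)$; this is the content of the asserted $\sigma(\{\cc(x,\parent x):x\in\mathcal T(v),\,x\neq v\})$-measurability. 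Each down-jump of $\mathbf X$ increments $L(v)$, and $\mathbf X$ can make its $(k{+}1)$-st down-jump only after returning to $v$; but the $\widetilde L(v)$-th down-excursion escapes, so once it is performed the walk never comes back to $v$ and no further down-jumps occur. Since $\mathbf X$ may moreover leave $v$ upward through $\parent v$ and never return---an event that can only stop the down-jumps sooner---we conclude $L(v)\le\widetilde L(v)$.

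Finally, given the environment, $\widetilde L(v)$ is geometric with parameter $1-q_\omega(v)$. The root admits the identical description with no parent present, so that $L(\emp)$, the number of visits to $\emp$, is geometric with parameter $1-q_\omega(\emp)$, where $q_\omega(\emp)$ is the return probability of a down-excursion from $\emp$ in the whole tree. By the recursive self-similarity of the PWIT (every vertex receives an independent $\Pil$-Poisson family of offspring), the rooted subtree $\mathcal T(v)$ has the same law as the whole PWIT rooted at $\emp$, whence $q_\omega(v)\ed q_\omega(\emp)$ and therefore $\widetilde L(v)\ed L(\emp)$. The step I expect to be the main obstacle is the coupling in the second paragraph: one must verify carefully that the extra returns to $v$ produced by upward excursions create additional \emph{opportunities} for down-jumps but can never push their number past the index $\widetilde L(v)$ of the first escaping down-excursion, so that the presence of two distinct mechanisms for leaving $v$ forever (an escaping down-excursion versus an escaping upward excursion) can only decrease $L(v)$, never increase it.
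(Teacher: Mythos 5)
Your proof is correct and is essentially the paper's argument in excursion form: the paper couples $\mathbf{X}$ with the trace process $\widetilde{X}(v)$ on $\mathcal{T}(v)$ (extended to run independently after $\mathbf{X}$ abandons $\mathcal{T}(v)$) and sets $\widetilde{L}(v)$ to be its number of visits to $v$, which is exactly your geometric variable $\inf\{k:\xi_k=0\}$ with parameter $1-q_\omega(v)$. Both versions rest on the same three facts you isolate — the down-excursion dynamics never see $\cc(v,\parent{v})$, the PWIT's recursive self-similarity gives $\mathcal{T}(v)\ed G_\ff[\emp]$, and upward escapes can only truncate the count — so the approaches coincide.
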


\begin{proof}
We construct a process $(\widetilde{X}_j(v))_{j\in\N}$ on $\mathcal{T}(v)$, Markovian under the quenched measure,  which uses the   conductances $\cc(x, \parent{x})$, with $ x \in \mathcal{T}(v)$ and $x \neq v$. This process is coupled with $\bf{X}$ as follows. 
Denote by $m_1$ the first time $\bf{X}$ hits  $v$, and recursively define
$$m_{i+1} := \inf\{ j > m_i \colon X_j \in \mathcal{T}(v), X_j \neq X_{m_i}\}.$$
Given the conductances, the process $(X_{m_i})_{i\in\N}$, is a Markov chain on $\mathcal{T}(v)$ up to a random time (possibly infinite) when the process $\mathbf{X}$ leaves $\mathcal{T}(v)$ for good.
We choose $(\widetilde{X}_j(v))_{j\in\N}$ to be a Markov chain (under the quenched meausure) which  satisfies
$$ \widetilde{X}_i(v) = X_{m_i},$$
for all $i$ such that $m_i<\infty$ (and for all $i$ such that $m_i=\ff$, it runs independently).
In words $(\widetilde{X}_i)_i $ is perfectly coupled with $(X_{m_i})_i$ up to the time the latter leaves $\mathcal{T}(v)$ for good.
If we set
\begin{eqnarray}\label{def:wtL}
\widetilde{L}(v) := \sum_{j=0}^\infty \1_{\widetilde{X}_j(v) = v}
\end{eqnarray}
it has the advertised properties.
\end{proof}


\noindent{\bf Proof of Theorem~\ref{finsecm}.}
We reason by contradiction.  Assume that
\begin{equation}\label{contr1}
\lim_{n \ti} \frac{|X_n|}n = 0, \qquad \mbox{a.s.} 
\end{equation}
{Of course \eqref{contr1}  would imply that }
\begin{equation}\label{contr1.5}
\lim_{n \ti} \frac{T(n)}{n} = +\infty, \qquad \mbox{a.s.,}
\end{equation}
{which in turn implies that}
\begin{equation}\label{contr2}
\bbE\left[\lim_{n \ti} \frac{T(n)}{n}\right] = \infty.
\end{equation}

On the other hand we prove, next, that there exists a constant $K$, such that 
\begin{equation}\label{contr3}
\bbE\left[\frac{T(n)}{n}\right] \le K, \qquad \mbox { for all $n$}.
\end{equation}
Once we have proved \eqref{contr3} we get a contradiction with \eqref{contr2} via Fatou's Lemma (the steps are shown below).
Order the distinct vertices $v_i$, $i \in \{1, 2, \ldots, G_n\}$,  visited by the process  such that $|v_i| \le n$, chronologically. Using the first property of $\tilde{L}$ in Lemma \ref{lem:tL} (see \eqref{def:wtL}), we bound $T(n)$ as follows:
$$ T(n) \le 2\sum_{i=1}^{G_n} (L(v_i) + L(\parent{v_i}) )  \le 2\sum_{i=1}^{G_n} (\tilde{L}(v_i) +\tilde{L}(\parent{v_i}) ) \le   4 \sum_{i=1}^{G_n} \tilde{L}(v_i) =  4 \sum_{i=1}^{\infty} \tilde{L}(v_i)\1_{G_n \ge i}. $$
Notice that $\1_{G_n \ge i}$  and $\tilde{L}(v_i)$ are independent.  In fact, the random variable $\tilde{L}(v_i)$ is independent  of the event weather  $v_i$ is visited or not by $\bf{X}$.  Hence for some finite constant  $K$
$$
\bbE\left[\sum_{i=1}^{\infty}(1+ \tilde{L}(v_i))\1_{G_n \ge i}\right] =
\sum_{i=1}^{\infty} \bbE\left[(1+ \tilde{L}(v_i)\right] \bbP(G_n \ge i) =\bbE\left[G_n\right] +  \bbE\left[\tilde{L}(\emp)\right]\bbE\left[G_n\right]\le K n,
$$
by virtue of Lemmas  \ref{Lfinmom}, \ref{lemcn} and \ref{lem:tL}, proving \eqref{contr3}. 
Finally, by Fatou's Lemma, we have
$$ K \ge \liminf_{n \ti} \bbE\left[ \frac{T(n)}n  \right] \ge \bbE\left[ \liminf_{n \ti}  \frac{T(n)}n  \right] = \infty,$$
yielding a contradiction and proving Theorem ~\ref{finsecm}.
\subsection{Proof of Theorem \ref{Hittimes}}

Fix $ q< p$.   Recall the definition of $\widetilde{L}(v)$ in \eqref{def:wtL} and the definition  of $(g_{i})_{i}$  given in the proof of Lemma~\ref{lemcn} .  
Label $\mu_i  k $, with $i \in \N$  and $k \le g_i$,  the vertices at level $i$ visited by the process and with the property $\cc(\mu_i k, \parent {(\mu_i k)}) < M$, for some fixed parameter $M$ as described in the proof of Lemma~\ref{lemcn} . Recall also the definition of $Z_{\mu_i  k}$ being the  size of the subcritical Galton--Watson subtree $\mathcal{G}(\mu_i  k)$ rooted at $\mu_i k$ and composed by vertices connected by edges whose conductances are larger than $M$.  In the sequel, for simplicity, we drop the subscript from $\mu_i k$. Define 
$$ D_{\mu k}  = \sum_{v \in \mathcal{G}(\mu_i  k)}  \widetilde{L}(v).$$
Next, we prove that $\E[D_{\mu k}^q]<\infty,$ for all $q<p$.  In fact, using Jensen and Holders\rq{} inequalities, we have
$$
\begin{aligned}
 \E\left[ \left(\sum_{v \in \mathcal{G}(\mu)}  \widetilde{L}(v)\right)^q\right] &\le  \E\left[ Z^{q-1}_\mu\left(\sum_{v \in \mathcal{G}(\mu)}  \widetilde{L}^q(v)\right)\right]\\
 &=  \E\left[ \sum_{v}  \widetilde{L}^q(v) Z^{q-1}_\mu \1_{v \in \mathcal{G}(\mu)}\right]\\
 &\le \E[  \widetilde{L}^p(v)]^{q/p} \sum_{v}  \E\left[Z^{(q-1)p/(p-q)}_\mu \1_{v \in \mathcal{G}(\mu)}\right]^{(p-q)/p}<\infty,
 \end{aligned}
 $$
 where the finitess of the last expression is derived by the fact that $Z$ has geometric tail and Lemma~\ref{Lfinmom}. Next, let $q<p^*<p$, 
\begin{equation}
\begin{aligned}
  \E\left[(T(n))^q\right] &\le  4^q 
\E \left[ \left(\sum_{k=1}^{G_n}  \widetilde{L}(v_{k})  \right)^q\right]\\
&\le 4^q \E \left[ \sum_{i=1}^{n} \left(\sum_{k=1}^{g_{i}} D_{\mu k}\right)^{q}\right]\\
&\le n^{q-1} n 4^q \E\left[\left(\sum_{k=1}^{g_{1}} D_{\mu k}\right)^{q}\right]\\
&= n^{q} 4^q \E\left[ g^{q-1}_{1} \sum_{k=1}^{\infty}  D_{\mu k}^{q}  \1_{g_1 \ge k}\right]\\
&\le  n^{q} 4^q \E\left[ D_{\mu 1}^{p^*}\right]^{q/p^*} \sum_{k=1}^{\infty}\E\left[ g^{(q-1)p^*/(p^*-q)}_{1} \1_{g_1 \ge k}\right]^{(p^*-q)/p^*} <\infty.
\end{aligned}
\end{equation}
Hence the collection of random variables $(T^{b}(n)/n^{b})_{n}$ is uniformly integrable for each $b  <p$. We already proved that under more general conditions $T(n)/n$ converges a.s. to $1/s$, and this yield our results.

 \subsection{Proof of Theorem \ref{thinf}}

 \begin{lem}\label{mainlem} For any  $M$ large enough, there exists a.s. a random vertex  $v$,  such  that  
 \begin{eqnarray*} \label{conM1}\cc(v, \parent{v})<M
 \end{eqnarray*}
 and such that $X_k=v$, for some $k\in\N$.
   \end{lem}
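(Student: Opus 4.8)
The plan is to show that the random walk, which almost surely exists as a transient process escaping to infinity, cannot confine itself entirely to edges of large conductance. The key heuristic is that under the assumption $\bbE[\tilde{\cc}] = \infty$, the edges with large conductance emanating from any vertex are too rare: since the conductances form a Poisson process with intensity $\Pi$ and $\int_0^\infty x\,\Pi(dx) = \infty$ forces heavy mass, one must examine the density of vertices reachable using only edges above threshold $M$.

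First I would fix $M>0$ large and consider the subgraph $\GG_M$ of the PWIT consisting only of those edges whose conductance exceeds $M$. For each vertex, the number of incident edges with conductance $> M$ is Poisson distributed with parameter $\Pi(M,\infty)$, which is \emph{finite} by \eqref{subordinatormeas}. Thus $\GG_M$ is (locally) a subcritical or critical branching structure when $M$ is large enough: the expected number of children across high-conductance edges, $\Pi(M,\infty)$, can be made small. The connected component of the root $\emp$ in $\GG_M$ is therefore almost surely \textbf{finite} — it is dominated by a subcritical Galton-Watson tree, a fact already exploited in the proof of Lemma~\ref{lemcn} via the trees $\mathcal{T}_i$. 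Since the walk $\mathbf{X}$ is transient and visits infinitely many distinct vertices (it escapes to infinity along the tree), it cannot remain forever within this finite component. Hence at some step the walk must traverse an edge of conductance at most $M$, which is exactly to say there is a visited vertex $v$ with $\cc(v,\parent{v}) < M$.

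More carefully, I would argue as follows. Let $v_0=\emp, v_1, v_2, \ldots$ be the distinct vertices visited by $\mathbf{X}$ in chronological order; transience guarantees this sequence is infinite and $|v_k|\to\infty$. Suppose for contradiction that \emph{every} edge traversed into a newly visited vertex had conductance $\ge M$, i.e. $\cc(v, \parent{v}) \ge M$ for all visited $v \neq \emp$. Then the entire visited vertex set would lie in the connected component $\CC_M(\emp)$ of $\emp$ in $\GG_M$. But $\CC_M(\emp)$ is finite almost surely (choosing $M$ so that $\Pi(M,\infty) < 1$ makes the associated Galton-Watson tree subcritical with finite total progeny), contradicting $|v_k|\to\infty$. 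Therefore the event that some visited $v$ satisfies $\cc(v,\parent{v})<M$ has probability one.

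The main obstacle I anticipate is making precise the claim that confinement to high-conductance edges forces confinement to a single finite component, rather than merely to a finite \emph{neighborhood}. One must rule out the possibility that the walk repeatedly enters fresh subcritical clusters while always entering them across high-conductance edges; but this cannot happen, because to pass from one maximal high-conductance component to another the walk must use an edge of conductance $< M$, which is precisely the edge we are looking for. So the argument is really: the first time the walk leaves $\CC_M(\emp)$ it does so across a low-conductance edge, and it must leave since $\CC_M(\emp)$ is finite while the walk is transient. The only technical care needed is to confirm that choosing $M$ large enough to ensure subcriticality of the high-conductance branching is compatible with the hypotheses — and indeed $\Pi(M,\infty)\to 0$ as $M\to\infty$ by \eqref{subordinatormeas}, so large $M$ suffices, matching the phrasing ``for any $M$ large enough'' in the statement.
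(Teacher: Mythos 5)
Your argument is correct and is essentially the paper's own proof: both identify the component of the root in the subgraph of edges with conductance $\ge M$ as a subcritical (hence a.s.\ finite) Galton--Watson tree for $M$ large, and conclude from transience that the walk must exit it across an edge of conductance $<M$. The extra care you take about distinct high-conductance components is sound but not needed beyond what the paper records.
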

   \begin{proof}
   Consider the Galton Watson tree $\mathcal{T}_\emp$ rooted at $\emp$ and consisting of descendant vertices connected to $\emp$ only by edges with conductances larger than $M$. Such a  tree was used in the proof of Lemma~\ref{lemcn}.   If $M$ is large enough, then $\mathcal{T}_\emp$ is a.s. finite and so the process $\mathbf{X}$ will hit, in a.s. finite time, a vertex $v$ such that $\cc(v, \parent{v})<M$.
   \end{proof}
  
 Fix $M>0$ and choose a $v$ as in Lemma \ref{mainlem} and note that $T_v$ is a stopping time. Define
 $$ S_v := \inf\{ n > T_{v} \colon  |X_n - v| =  2\}.$$
 Also, let  $\{v_i, i \in \N\}$ be the offspring of $v$, ordered in such a way that $\cc(v, v_i) \ge \cc(v, v_{i+1})$. The following result shows that the version of our model with the largest conductance having infinite mean is similar to a trap model (see \cite[Section 5]{benarous2006}). {Denote by $\Fcal_{T_v}$ the $\sigma$-algebra 
 consisting of the sets $A \in \Fcal_{\infty}$ such that $A \cap \{T_v = k \} \in \Fcal_k$ for $k \in \N$. 
   \begin{prop}[Large conductances are traps] There is a constant $c>0$ such that
$$ 	\bbP(S_v \ge n\;|\;\Fcal_{T_v}) \ge c \bbP(\cc(v, v_1)\ge n).$$ In particular, when $\bbE[\cc(v, v_1)]=\ff$ then 
$$ 	\bbE[S_v] = \infty. $$	
 	\end{prop}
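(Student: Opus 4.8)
The plan is to show that the walk becomes trapped oscillating across the single edge $\{v,v_1\}$ joining $v$ to its highest-conductance child, and that the number of oscillations is of the same order as $\cc(v,v_1)$.

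\emph{Reduction to an exit time.} Set $\tau := \inf\{m > T_v : X_m \notin \{v,v_1\}\}$. Every vertex at graph-distance $2$ from $v$ lies outside $\{v,v_1\}$ (the two vertices of which sit at distances $0$ and $1$), so the walk cannot be at distance $2$ before time $\tau$, giving $S_v \ge \tau$. I would then invoke the strong Markov property at $T_v$: since $T_v$ is the first visit to $v$ and the walk arrives from $\parent{v}$, the subtree of descendants of $v$ is an unexplored (fresh) PWIT given $\Fcal_{T_v}$. To remain in $\{v,v_1\}$ the walk must strictly alternate $v\to v_1\to v\to\cdots$ (no self-loops, and these are the only admissible moves), so quenched
\[
\P_\omega(\tau > 2k \mid X_{T_v}=v) = (pq)^k,\qquad p := \frac{\cc(v,v_1)}{\rho_\ff(v)},\quad q := \frac{\cc(v,v_1)}{\rho_\ff(v_1)},
\]
and with $S_v\ge\tau$ this yields $\P_\omega(S_v\ge n)\ge (pq)^{\lceil n/2\rceil}$.

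\emph{From the conductance to a survival probability.} Write $R_v := \rho_\ff(v)-\cc(v,v_1)\ge 0$ and $R_{v_1} := \rho_\ff(v_1)-\cc(v,v_1)\ge 0$ for the leftover conductances at the two endpoints. Using $\log(1+t)\le t$ and $n\le \cc(v,v_1)$ on the event $\{\cc(v,v_1)\ge n\}$, I would bound
\[
(pq)^{\lceil n/2\rceil}\ge \exp\!\Big(-\tfrac n2\Big[\tfrac{R_v}{\cc(v,v_1)}+\tfrac{R_{v_1}}{\cc(v,v_1)}\Big]\Big)\ge \exp\!\big(-\tfrac12(R_v+R_{v_1})\big),
\]
so that $\P_\omega(S_v\ge n)\ge e^{-(R_v+R_{v_1})/2}\,\1_{\cc(v,v_1)\ge n}$, a bound whose exponent no longer depends on $n$.

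\emph{Annealing and conclusion.} Taking $\bbE[\,\cdot\mid\Fcal_{T_v}]$, I would use that the environment below $v$ is fresh. The factor $e^{-R_{v_1}/2}$ is independent of everything at $v$ and, $R_{v_1}$ being an a.s.\ finite $\ID(\Pi)$ sum, contributes a positive constant. For $R_v=\cc(v,\parent{v})+\sum_{i\ge 2}\cc(v,v_i)$ I would split the offspring Poisson process of $v$ into its independent parts on $(0,n)$ and $[n,\infty)$, and restrict to the event $E^*$ that $[n,\infty)$ carries exactly one atom (the maximum); then $R_v=\cc(v,\parent{v})+S_{<n}$, where $S_{<n}$ is the sum over $(0,n)$, independent of $E^*$ and of $\{\cc(v,v_1)\ge n\}$, while $\bbP(E^*)\ge \tfrac12\bbP(\cc(v,v_1)\ge n)$ for large $n$ because $\Pi[n,\infty)\to 0$. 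Using $\cc(v,\parent{v})<M$ and the L\'evy--Khintchine formula,
\[
\bbE\big[e^{-S_{<n}/2}\big]=\exp\!\Big(-\!\int_{(0,n)}(1-e^{-x/2})\,\Pi(dx)\Big)\ge \exp\!\Big(-\!\int_{(0,\infty)}(1-e^{-x/2})\,\Pi(dx)\Big)>0
\]
uniformly in $n$, the last integral being finite by \eqref{subordinatormeas}. Collecting the three positive constants gives $\bbP(S_v\ge n\mid\Fcal_{T_v})\ge c\,\bbP(\cc(v,v_1)\ge n)$, and summing the tail,
\[
\bbE[S_v]=\sum_{n\ge 1}\bbP(S_v\ge n)\ge c\sum_{n}\bbP(\cc(v,v_1)\ge n)=\infty,
\]
since $\cc(v,v_1)\ed \tilde{\cc}$ has infinite mean.

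The main obstacle is precisely the uniform-in-$n$ lower bound on $\bbE\big[e^{-R_v/2}\,\1_{\cc(v,v_1)\ge n}\big]$: in the infinite-mean regime $\bbE[R_v]\to\infty$, so one cannot simply truncate $R_v$ by a fixed constant. The resolution is to pass to the exponential moment, where the relevant quantity $\int(1-e^{-x/2})\Pi(dx)$ is finite thanks to the subordinator integrability \eqref{subordinatormeas}, combined with the Poisson split that decouples $S_{<n}$ from the single large atom and the strong Markov freshness of the subtree below $v$ given $\Fcal_{T_v}$.
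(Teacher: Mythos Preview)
Your argument is correct and rests on the same core idea as the paper: the walk is trapped oscillating across the single edge $\{v,v_1\}$, and the number of oscillations before escape is of order $\cc(v,v_1)$. One small slip: the inequality $(pq)^{\lceil n/2\rceil}\ge e^{-(R_v+R_{v_1})/2}$ is only exact for even $n$; for odd $n$ you get exponent $-(R_v+R_{v_1})\lceil n/2\rceil/n$, which is still $\ge -(R_v+R_{v_1})$, so the argument goes through with a different constant.

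Where you and the paper diverge is in how the ``leakage'' conductances $R_v,R_{v_1}$ are controlled. The paper truncates up front: it works on the fixed event $D=\{\sum_{i\ge 2}\cc(v,v_i)<M,\ \sum_i\cc(v_1,d_i)<M\}$, on which both one-step stay-probabilities are deterministically at least $\cc(v,v_1)/(\cc(v,v_1)+2M)$, and then uses $(1-2M/(n+2M))^n\to e^{-2M}$ together with $\bbP(D)>0$. You instead keep the random $R_v,R_{v_1}$ in the quenched bound and pass to the exponential moment via L\'evy--Khintchine, handling the dependence between the maximum $\cc(v,v_1)$ and the remainder $\sum_{i\ge 2}\cc(v,v_i)$ by splitting the Poisson process at level $n$ and restricting to the event $E^*$ of exactly one atom above $n$. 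Your route is slightly longer but makes the decoupling of the maximum from the rest fully explicit (a point the paper's argument glosses over when it treats $\bbP(D)$ as if independent of $\cc(v,v_1)$); the paper's route is shorter and more elementary once one accepts that $\bbP(D\mid\cc(v,v_1))$ is bounded below uniformly.
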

 \begin{proof}
 
   Let $\{d_i, i \in \N\}$ be the offspring of $ v_1$ and set 
  $$ D :=  \left\{
 \sum_{i=2}^\infty \cc(v, v_i)<M, \qquad \sum_{i=1}^\infty\cc(v_1, d_i)<M \right\}.$$ 
 {Fix  an event $A$ such that $A \cap \{T_v = t\} \in \Fcal_t$. The event $A$ must be independent of $D$, as the conductances involved in the definition of $D$ can be used only after time $T_v$.
     In fact, both $D$ and $\CC(v, v_1)$ are independent of  $\Fcal_{T_v}$, and $\bbP(D) > 0$. Thus}
 $$
 \begin{aligned}
   \bbP(S_v \ge n, D\;|\; \cc(v, v_1), \Fcal_{T_v}) &\ge \prod_{i=1}^n \frac{\cc(v, v_1)}{\cc(v, v_1) +2M} \bbP(D) = \left(1 - \frac{2M}{\cc(v, v_1) +2M}\right)^n \bbP(D).
   \end{aligned}
   $$
  We therefore have 
   $$
   \begin{aligned}
   \bbP(S_v \ge n\;|\;\Fcal_{T_v})  &\ge \bbP(S_v \ge n \;|\; \cc(v, v_1)\ge n, \Fcal_{T_v})  \bbP(\cc(v,  v_1)\ge n\;|\;\Fcal_{T_v})\\
   &\ge \bbP(S_v \ge n, D \;|\; \cc(v, v_1)\ge n, \Fcal_{T_v})  \bbP(\cc(v,  v_1)\ge n\;|\;\Fcal_{T_v})\\
   &\ge \left(1 - \frac{2M}{n +2M}\right)^n \bbP(D)\bbP(\cc(v, v_1)\ge n\;|\;\Fcal_{T_v})\\
   &= \left(1 - \frac{2M}{n +2M}\right)^n \bbP(D) \bbP(\cc(v, v_1)\ge n)\\
   &\ge {\rm e}^{-2M + o(1)}\bbP(D)\bbP(\cc(v, v_1)\ge n).
   \end{aligned}
   $$
   We conclude that
   $$ \bbE[S_v\;|\;\Fcal_{T_v}] = \sum_{n=1}^\infty  \bbP(S_v \ge n\;|\;\Fcal_{T_v})  \ge  \sum_{n=1}^\infty {\rm e}^{-2M + o(1)} \bbP(\cc(v, v_1)\ge n\;|\;\Fcal_{T_v}) = \infty,$$
   where we used the fact, under the infinite mean assumption,
   \begin{equation}\label{infsum}
   \sum_{n=1}^\infty \bbP({\cc(v, v_1)} \ge n\;|\;\Fcal_{T_v})=    \sum_{n=1}^\infty \bbP({\cc(v, v_1)} \ge n)= \infty. \qquad 
   \end{equation}
   

 \end{proof}

\noindent{\bf Proof of Theorem~\ref{thinf}}.

As a consequence of transience, there exists an infinite sequence of regenerative times, which can be described as the hitting times of levels which are visited exactly once. Let
$$\tau_1 := \inf\{k\colon |X_k| < |X_u|\; \mbox{ for all  $u > k$} \}.$$
Define, recursively,
\begin{align}\label{tau def}
\tau_i := \inf\{k > \tau_{i-1}\colon |X_k| <  |X_u|\; \mbox{ for all  $u > k$} \}.
\end{align} 
The sequence $(\tau_i)_i$ is the { regenerative-time} sequence.
Define also the regenerative levels $\ell_i = |X_{\tau_i}|$.

Choose $m$  large enough so that the expected regeneration-time interval satisfies \mbox{$\bbE[\tau_m - \tau_1] \ge  \bbE[T(5)]$.}
   By our choice of $m$, we have that the regeneration-time interval $\tau_m - \tau_1$  has infinite first moment since
   $$
  \begin{aligned}
     \bbE[\tau_m - \tau_1] &\ge  \bbE[T(5)]  \ge \bbE[T(5) \1_{T_{v} < T(3)}] \ge \bbE[S_v \1_{T_{v} < T(3)}] \\
     &\ge \bbE[  \1_{T_{v} < T(3)}\bbE[S_v\;|\;\Fcal_{T_v}]] = \infty  \cdot \bbP(T_{v} < T(3)) = \infty.
     \end{aligned}
     $$
   Recall that each $\tau_n$ coincides with a $T(\ell_n)$ for regeneration level $\ell_n$.  Hence
  $$  \limsup_{n \ti}  \frac{T(n)}n  \ge  \lim_{n \ti}  \frac{\sum_{k=1}^{\floor{\ell_n/m}} \tau_{m k} - \tau_{m(k-1)}}{\ell_n}   = \infty, \qquad \mbox{a.s.,}$$
via the Strong Law of Large Numbers. 
  Finally, we have
  $$ \liminf_{n \ti} \frac{|X_n|}n = \liminf_{n \ti} \frac n{T(n)}  = 0, \qquad \mbox{a.s.}$$\section*{Acknowledgements}
 We  thank Daniel Kious and Greg Markowsky for helpful discussions, and an anonymous referee for pointing out an error in an earlier version of the paper. A.C. was supported by ARC grant DP140100559. P. J. was supported in part by NSA grant H98230-14-1-0144 and NRF grant N01170220.

\bibliographystyle{alpha}
\bibliography{BibSep16}	

\end{document}